\let\ssection=\section
\renewcommand{\section}{\setcounter{equation}{0}\ssection}
\newtheorem{thm}{Theorem}[section]
\newtheorem{lem}[thm]{Lemma}
\newtheorem{cor}[thm]{Corollary}
\newtheorem{prop}[thm]{Proposition}
\newtheorem{defi}[thm]{Definition}
\newtheorem{ex}[thm]{Example}
\newtheorem{rmk}[thm]{Remark}
\def\a{\alpha}
\def\b{\beta}
\def\g{\gamma}
\def\Ga{\Gamma}
\def\d{\delta}
\def\l{\lambda}
\def\m{\mu}
\newcommand{\bbR}{\mathbb{R}}
\newcommand{\bbC}{\mathbb{C}}
\newcommand{\bbN}{\mathbb{N}}
\newcommand{\bbZ}{\mathbb{Z}}
\newcommand{\bbS}{\mathbb{S}}
\newcommand{\bi}{\mathsf{i}}
\newcommand{\cA}{\mathcal{A}^\l}
\newcommand{\cAl}{\mathcal{A}^{\l,\ell}}
\newcommand{\Bv}{\mathrm{Bv}}
\newcommand{\cC}{{\mathcal{C}}}
\newcommand{\cE}{\mathcal{E}}
\newcommand{\Cinfty}{{\mathcal{C}^{\infty}}}
\newcommand{\D}{\mathcal{D}}
\newcommand{\Dlm}{\mathcal{D^{\l,\m}}}
\newcommand{\Div}{\mathrm{Div}}
\newcommand{\cF}{{\mathcal{F}}}
\newcommand{\fkg}{{\mathfrak{g}}}
\newcommand{\GL}{{\mathrm{GL}}}
\newcommand{\Gr}{{\mathrm{gr}}}
\newcommand{\mg}{\mathrm{g}}
\newcommand{\HSC}{\mathrm{CHS}}
\newcommand{\HSQ}{\mathrm{QHS}}
\newcommand{\fkh}{{\mathfrak{h}}}
\newcommand{\cJ}{\mathcal{J}}
\newcommand{\cI}{\mathcal{I}}
\newcommand{\Id}{\mathrm{Id}}
\newcommand{\cK}{\mathcal{K}}
\newcommand{\ccL}{\mathcal{L}}
\newcommand{\LD}{\mathcal{L}^{\lambda,\mu}}
\newcommand{\LDX}{\mathcal{L}_X^{\lambda,\mu}}
\newcommand{\ro}{\mathrm{o}}
\newcommand{\cO}{{\mathcal{O}}}
\newcommand{\PBW}{\mathrm{Sym}}
\newcommand{\cQ}{{\mathcal{Q}}}
\newcommand{\cQlm}{\mathcal{Q}^{\l,\m}}
\newcommand{\cN}{{\mathcal{N}}}
\newcommand{\rO}{\mathrm{O}}
\newcommand{\rR}{\mathrm{R}}
\newcommand{\Ric}{\mathrm{Ric}}
\newcommand{\cS}{{\mathcal{S}}}
\newcommand{\rS}{\mathrm{S}}
\newcommand{\SL}{\mathrm{SL}}
\newcommand{\Sl}{\mathfrak{sl}}
\newcommand{\Sp}{\mathrm{Sp}}
\newcommand{\Tr}{\mathrm{Tr}}
\newcommand{\Vect}{\mathrm{Vect}}
\newcommand{\vol}{\mathrm{vol}}
\newcommand{\half}{\frac{1}{2}}
\begin{document}

\baselineskip=15pt

\title{Higher symmetries of the Laplacian via quantization}

\author{Jean-Philippe Michel}
\address{University of Luxembourg, Campus Kirchberg, Mathematics Research Unit, 6, rue Richard Coudenhove-Kalergi, L-1359 Luxembourg City, Grand Duchy of Luxembourg}
\address{University of Li\`ege, Sart-Tilman, 12 grande traverse, B-4000 Li\`ege, Belgium}
\email{jean-philippe.michel@ulg.ac.be}
\thanks{I thank the Luxembourgian NRF for support via the AFR grant PDR-09-063.}

\keywords{Symmetry algebra, Laplacian, Quantization, Conformal geometry, Minimal nilpotent orbit, Symplectic reduction.}
\subjclass[2010]{58J10, 53A30, 70S10, 17B08, 53D20, 53D55}

\begin{abstract}
We develop a new approach, based on quantization methods, to study higher symmetries of invariant differential operators. We focus here on conformally invariant powers of the Laplacian over a conformally flat manifold and recover results of Eastwood, Leistner, Gover and \v{S}ilhan.
In particular, conformally equivariant quantization establishes a correspondence between the algebra of Hamiltonian symmetries of the null geodesic flow and the algebra of higher symmetries of the conformal Laplacian. Combined with a symplectic reduction, this leads to a quantization of the minimal nilpotent coadjoint orbit of the conformal group. 
 The star-deformation of its algebra of regular functions is isomorphic to the algebra of higher symmetries of the conformal Laplacian. Both identify with the quotient of the universal envelopping algebra by the Joseph ideal.
\end{abstract}

\maketitle

\section{Introduction}

There are a number of different notions of symmetries for a differential operator $P$ on a manifold $M$. The most basic symmetries are the vector fields $X\in\Vect(M)$ preserving the considered operator: $[P,X]=0$. More generally, symmetries can be given by differential operators $D\in\D(M)$ of arbitrary order which commute with $P$. Such symmetries obviously preserve the eigenspaces of $P$. Here, we are interested in the more general notion of \textit{higher symmetries}. They are defined as the differential operators $D_1$ satisfying $PD_1=D_2P$ for some differential operator $D_2$. Thus, they preserve the kernel of $P$ but not the other eigenspaces in general. They form a subalgebra of $\D(M)$. 

The higher symmetries given by differential operators of first order form a Lie algebra $\fkg$, which contains the vector fields preserving $P$. The determination of the space of higher symmetries of $P$, together with its algebra and $\fkg$-module structure, is of interest from at least two points of view: the integrability of the equation $P\phi=0$, with $\phi$ in the source space of $P$, and the representation theory of the $\fkg$-module $\ker P$. 

Higher symmetries have been investigated first for the Laplacian $P=\Delta$. On $\bbR^3$, Boyer, Kalnins and Miller have classified all the second order higher symmetries of the Laplacian \cite{BKM76}, which allows them to get all the possible coordinates systems separating the equation $\Delta\phi=0$. Later on, revealing the conformal nature of higher symmetries of $\Delta$, Eastwood has classified all of them on $\bbR^n$ \cite{Eas05}. In particular, he provides an explicit bijection between the higher symmetries of $\Delta$  and the traceless conformal Killing tensors. After this seminal work, higher symmetries of various operators have been investigated from the point of view of parabolic geometries \cite{ELe08,GSi09,Vla12}, using either ambient method or tractor calculus. Physics paper have also appeared on the subject \cite{BGr10,BMM12}. 

Up to constants, the Lie algebra of first order higher symmetries of $\Delta$ on $\bbR^{p,q}$ is given by $\fkg=\ro(p+1,q+1)$, which acts by conformal Killing vector fields $X\in\Vect(\bbR^{p,q})$, i.e. $L_X\mg=f_X \mg$, with $f_X\in\Cinfty(M)$ and $\mg$ the pseudo-Euclidean metric. Explicitly, for $X\in\fkg$,  we have 
\begin{equation}\label{Sym1}
\Delta (X+\l\Div X)=(X+\m\Div X)\Delta,
\end{equation}  
where $\Div$ is the divergence operator, $\l=\frac{n-2}{2n}$, $\m=\frac{n+2}{2n}$.
In \cite{Eas05}, Eastwood proves that the algebra of higher symmetries of $\Delta$ is a quotient of the universal enveloping algebra $\mathfrak{U}(\fkg)/J$. Moreover, he computes the ideal $J$ which turns to be equal to the classical Joseph ideal \cite{Jos76}.

The results in \cite{Eas05}  rely on the conformal invariance of $\Delta$ on $\bbR^{p,q}$ and hold as well on any conformally flat manifold $(M,\mg)$ \cite{GSi09}, after replacing $\Delta$ by the conformal Laplacian
$$P_1=\nabla_i\,\mg^{ij}\nabla_j-\frac{n-2}{4(n-1)}\rR,$$ 
 where $\nabla$ is the Levi-Civita connection and $\rR$ the scalar curvature. On the homogeneous model of conformal geometry, given by the product of spheres $\bbS^p\times\bbS^q$, the representation of $\fkg$ on $\ker P_1$ defined by  \eqref{Sym1} integrates to a unitary irreducible representation of the Lie group $G=\rO(p+1,q+1)$, if $p+q\geq 4$ is even  \cite{BZi91}. This is the intensively studied minimal representation of $G$, see e.g$.$ \cite{KOr03I,KOr03III}. The induced representation of $\mathfrak{U}(\fkg)$  on $\ker P_1$ has for kernel the Joseph ideal $J$, as proved in \cite{BZi91}, and coincide then with the action of higher symmetries on $\ker P_1$.

In this paper, we obtain the classification of higher symmetries of $P_1$ and their algebraic structure in a new manner, using the theory of equivariant quantization of cotangent bundles \cite{DLO99,BMa06,CSi09}. By the way, we get three new results. First, we establish that the map between traceless conformal Killing tensors and higher symmetries of the Laplacian is a restriction of the conformally equivariant quantization, which is defined on all the algebra of symmetric tensors \cite{DLO99}. Second, we identify the algebra of traceless conformal Killing tensors on $\bbS^p\times\bbS^q$ with the algebra of regular functions on $\mathcal{O}_{00}$, the  minimal nilpotent coadjoint orbit of $G$. 
Third, we provide a geometric interpretation for the algebraic structure of the space of higher symmetries of $P_1$, as the unique $\fkg$-equivariant star-deformation of the algebra of regular functions on $\mathcal{O}_{00}$, investigated in \cite{ABC94,ABr02}. We determine more generally the higher symmetries of the conformal powers of the Laplacian, denoted $P_\ell$, and thus recover the results of Gover and \v{S}ilhan \cite{GSi09}. 
The present approach can be generalized to number of cases, indeed, equivariant quantization is available for any $|1|$-graded parabolic geometry and for differential operators acting on any irreducible natural bundles \cite{CSi09}. 

Let us now detail the content of this paper. 

In Section $2$, we describe our main tools, namely the classification of conformally invariant operators on symbols \cite{ERi87,Mic11a}, the conformally equivariant quantization $\cQ^{\l,\l}$ \cite{DLO99}, parametrized by $\l\in\bbR$, and the induced star product on symbols \cite{DEO04}. 

In Section $3$, we formulate and prove our first main result. We characterize the space $\cAl$ of higher symmetries of $P_\ell$  (with $\l=\frac{n-2\ell}{2n}$) and the space $\cK^\ell$ of $s$-generalized conformal Killing tensors with $s<\ell$ \cite{NPr90}, as kernels of some conformally invariant operators. Then, we prove that conformally equivariant quantization $\cQ^{\l,\l}$ intertwines both conformally invariant operators. As a result, we get an isomorphism of $\fkg$-module $\cQ^{\l,\l}:\cK^\ell\rightarrow\cAl$, with $\fkg$ the  the Lie algebra of conformal vector fields. Note that explicit formulas are available for the conformally equivariant quantization \cite{DOv01,Lou03,Rad09,Sil09}. 
  
In section $4$, we identify the algebras $\cK^\ell$ and $\cAl$. We prove that the space $\cK$ of generalized conformal Killing tensors is the subalgebra generated by $\fkg$ of the algebra of symmetric tensors. Moreover, the spaces $\cK^\ell$ arise as quotients of $\cK$. Similarly, the spaces of symmetries $\cAl$ are obtained as quotients of the algebra $\cA$ of differential operators generated by $X+\l\Div X$ with $X\in\fkg$. We describe then all the coadjoint orbits of $G$ in the image of the moment map $\mu:T^*\bbR^{p+1,q+1}\rightarrow\fkg^*$ as symplectic reductions of the source manifold. The algebras of regular functions on the two nilpotent orbits in the image of $\m$ identify with $\cK$ and $\cK^1$. As a consequence, we get an explicit description of the symmetry algebras $\cAl$ as deformations of $\cK^\ell$. More precisely, the algebra $\cK^1$ is the algebra of regular functions on the minimal nilpotent coadjoint orbit $\mathcal{O}_{00}$ and the corresponding symmetry algebra $\mathcal{A}^{\l,1}$ of $P_1$ is isomorphic to a quotient $\mathfrak{U}(\fkg)/\cJ$. 
The ideal $\cJ$ is identified with the Joseph ideal from its defining property: this is the unique completely prime ideal in $\mathfrak{U}(\fkg)$ with associated variety $\overline{\mathcal{O}_{00}}$ \cite{Jos76}.
Finally, we build a star product on each coadjoint orbit in the image of $\mu$.
In particular, the conformally equivariant quantization induces the unique graded $\fkg$-equivariant star-product on $\mathcal{O}_{00}$, studied in  \cite{ABC94,ABr02}, and furnishes a representation of this star-product on $\ker P_1$.

\section{Conformal geometry of differential operators and of their symbols}
We introduce in this section the basic notions that we use throughout the paper. We recall two important results : the existence and uniqueness of the conformally equivariant quantization \cite{DLO99} and the classification of the conformally invariant operators on the space of symbols, as in \cite{Mic11a}.
 
\subsection{Basic definitions}
Let $M$ be a smooth manifold and $\D(M)$ be the algebra of differential operators on $\Cinfty(M)$. The algebra $\D(M)$ has a natural filtration
$$
\D_0(M)\subset\D_1(M)\subset\cdots\subset\D_k(M)\subset\cdots,
$$
where the space $\D_k(M)$, of differential operators of order $k$, is defined as the space of operators $P$ on $\cC^\infty(M)$ satisfying $[\cdots [P,f_0],\cdots],f_k]=0$ for all functions $f_0,\ldots,f_k\in\cC^\infty(M)$. The associated graded algebra $\Gr\,\D(M)$ is defined as 
$$
\cS(M)=\bigoplus_{k=0}^\infty\D_k(M)/\D_{k-1}(M)
$$
 and called the algebra of symbols. It identifies to two isomorphic algebras: the algebra of symmetric tensors $\Ga(STM)$ and the algebra of functions on $T^*M$, which are fiberwise polynomial. 
In that way, $\cS(M)$ inherits of the canonical Poisson bracket $\{\cdot,\cdot\}$ on~$T^*M$.

The canonical projections $\sigma_k:\D_k(M)\rightarrow\D_k(M)/\D_{k-1}(M)$ are called the principal symbol maps. They satisfy the two following properties
\begin{eqnarray}\label{sigma_product}
\sigma_{k+l}(AB)&=&\sigma_k(A)\sigma_l(B), \\ \label{sigma_poisson}
\sigma_{k+l-1}([A,B])&=&\{\sigma_k(A),\sigma_l(B)\},
\end{eqnarray}
for all $A\in\D_k(M)$ and $B\in\D_l(M)$.

\subsection{Actions of $\Vect(M)$ on the spaces of differential operators and symbols}

The diffeomorphisms of $M$ lift canonically to automorphisms of $\GL(M)$, the principal bundle of linear frames over $M$. Consequently, they act canonically on sections of every associated bundles to $\GL(M)$. The corresponding infinitesimal actions of the Lie algebra $\Vect(M)$ of vector fields are given by Lie derivatives. In particular, we get a $\Vect(M)$-module structure on the spaces of symbols $\cS(M)$. 

The space of $\l$-densities is defined as $\cF^\l:=\Ga(|\Lambda^n T^*M|^{\otimes\l})$, with $\l\in\bbR$. The line bundle $|\Lambda^n T^*M|^{\otimes\l}$ is the associated line bundle 
\[|\Lambda^n T^*M|^{\otimes\l} = \GL(M)\times_{\rho}\bbR,\]
where the representation $\rho$ of the group $GL(n,\bbR)$ on $\bbR$ is given by
\[\rho(A) e = \vert\det A\vert^{-\lambda} e,\quad\forall A\in
GL(n,\bbR),\;\forall e\in\bbR.\]
Via a global section $|\vol|^\l$, the $\Vect(M)$-module $\cF^\l$ identifies to the module $(\cC^\infty(M),L^\l)$, endowed with the $ \Vect(M)$-action 
\begin{equation}\label{Ll}
L_X^\l=X+\l\Div(X),
\end{equation} 
where $\Div$ is the divergence operator with respect to $|\vol|$. Note that a metric $\mg$ on $M$ defines a canonical $1$-density denoted $|\vol_\mg|$.

The $\Vect(M)$-module $\Dlm$ of differential operators from $\l$- to $\mu$-densities identifies to $(\D(M),\LD)$, with $$\LDX A=L_X^\m A-AL_X^\l,$$ 
for all $X\in\Vect(M)$ and $A\in\D(M)$. This action preserves the filtration of $\D(M)$, hence the algebra of symbols inherits of a $\Vect(M)$-action compatible with the grading. This action coincides with the $\Vect(M)$-action by Lie derivative on $$\cS^\d=\cS(M)\otimes_{\Cinfty(M)}\cF^\d,$$ for $\d=\mu-\l$.

\subsection{Conformal Lie algebra}
A conformal structure on a smooth manifold $M$ is  given by an equivalence class $[\mg]$ of pseudo-Riemannian metrics, where two metrics $h$ and $\mg$ are considered equivalent if $h=F\mg$ for some positive function $F\in\Cinfty(M)$. The signature $(p,q)$ of the metric $\mg$ is an invariant of the conformal structure. 

To each signature corresponds a canonical flat model $(\bbR^{p,q},[\eta])$, with $\eta=\mathbb{I}_p\otimes-\mathbb{I}_q$. The conformal manifold $(M,[\mg])$ is said to be conformally flat if it admits an atlas $(U_i,\phi_i)$, such that $\phi_i^*[\eta]$ coincides with the restriction of $[\mg]$ to $U_i$.

The vector fields that preserve  a conformal class $[\mg]$ are called conformal Killing vector fields. They are characterized by the equation $L_X\mg= f_X \mg$, with $L_X\mg$ the Lie derivative of $\mg$ along $X$ and $f_X\in\Cinfty(M)$. If $(M,[\mg])$ is conformally flat of dimension $p+q\geq 3$, the local conformal Killing vector fields form a sheaf of Lie algebras locally isomorphic to
$$
\fkg=\ro(p+1,q+1),
$$
which is the conformal Lie algebra of  $(\bbR^{p,q},[\eta])$.

An important example of conformally flat manifold is $\bbS^p\times\bbS^q$, viewed as a homogeneous space of $G=\rO(p+1,q+1)$. Starting from the isometric action of $G$ on the pseudo-Euclidean space $\bbR^{p+1,q+1}$, we get an action of $G$ on the space of isotropic half-lines, which identifies naturally to the manifold $\bbS^p\times\bbS^q$. Via this construction, the flat metric on $\bbR^{p+1,q+1}$ induces a conformally flat structure on $\bbS^p\times\bbS^q$, preserved by the $G$-action. 

\subsection{Conformal invariants}

The classification of differential operators, acting between natural bundles and which are invariant under the action of local conformal Killing vector fields, is the same over all conformally flat manifolds $(M,[\mg])$ of signature $(p,q)$. Using local conformal coordinates $(x^i)$, which are  such that $\mg_{ij}=F\eta_{ij}$ for a positive function~$F$, the invariant differential operators are given by the same formul\ae{}  on $(M,[\mg])$ and on $\bbR^{p,q}$. Moreover, such a classification can be deduced from the classification of morphisms of generalized Verma modules of $\fkg=\ro(p+1,q+1)$, obtained in \cite{BCo85a,BCo85b}. All results presented here can also be derived from the Weyl theory of invariants \cite{Wey97}, applied to the affine part of $\fkg$, and basic computations, see e.g.\ \cite{Mic11a}.

First, we provide the well-known classification of the conformal invariants of the $\Vect(M)$-modules of differential operators $\Dlm$ and of symbols $\cS^\d$. We write them in terms of local conformal coordinates  $(x^i,p_i)$  on $T^*M$, of the corresponding derivatives $(\partial_i,\partial_{p_i})$, and of the $1$-densities $|\vol_\mg|$ and $|\vol_\eta|$ determined by the metrics $\mg$ and $\eta$ respectively. 
\begin{prop}\label{SDinv}
On a conformally flat manifold $(M,[\mg])$, the conformal invariants of $(\cS^\d)_{\d\in\bbR}$ and $(\Dlm)_{\l,\m\in\bbR}$ are given, up to a multiplicative constant, by  
\begin{itemize}
\item $R^\ell\in\cS^\frac{2\ell}{n}$ for $\ell\in\bbN$,
\item $P_\ell\in\Dlm$ for  $\ell\in\bbN$ and $\l=\frac{n-2\ell}{2n}$, $\m=\frac{n+2\ell}{2n}$,
\end{itemize}
where $R=|\vol_\mg|^{2/n}\mg^{-1}$ and $P_\ell$ is the $\ell^{\text{th}}$ conformal power of the Laplacian. In conformal coordinates, they read locally as  $R=|\vol_\eta|^{2/n}\eta^{ij}p_ip_j$ and $P_\ell=|\vol_\eta|^{2\ell/n}(\eta^{ij}\partial_i\partial_j)^\ell$ .
\end{prop}
We refer to \cite{GPe03} and references inside for global expressions of the conformal powers of the Laplacian. 
 Since the principal symbol map is $\Vect(M)$-equivariant, conformally invariant differential operators give rise to conformally invariant symbols, but the fact that they are in correspondence is remarkable. 

Second, we present the classification of the conformally invariant differential operators on the space of symbols, as it appears in \cite{Mic11a}. It relies on the harmonic decomposition of the $\fkg$-module of symbols, namely
$$
\cS^\d=\bigoplus_{k,s\in\bbN,\, 2s\leq k}\cS^\d_{k,s}, 
$$
where $\cS^\d_{k,s}$ is the module of symbols $S$ of degree $k$ and of the form $S=R^sS_0$ with $S_0$ a traceless symbol. This means $TS_0=0$, where $T$ is the trace operator locally given by $T=\eta_{ij}\partial_{p_i}\partial_{p_j}$. The other local operators playing a role are 
$$
D=\partial_i\partial_{p_i}, \qquad G=\eta^{ij}p_i\partial_j, \qquad \Delta=\eta^{ij}\partial_i\partial_j,
$$
the divergence, gradient and Laplace operators respectively. 
\begin{thm}\cite{Mic11a}\label{ConfInv}
Let $k\geq 2s$ and $k'\geq 2s'$ be four integers, and $\d,\d'\in\bbR$. The space of conformal invariant differential operators from
$\cS^\d_{k,s}$ to $\cS^{\d'}_{k',s'}$ satisfies
\begin{itemize}
\item if $\frac{n}{2}(\d'-\d)\notin\bbZ$, it is trivial,
\item if $j=\frac{n}{2}(\d'-\d)\in\bbZ$, it is one dimensional and generated by
$$
\left\{
\begin{array}{lllcl}
R^{s'}D^dT^s,& \text{ if } \,s'-s=j,\; &k-k'=d-2j &\text{ and }& \d=1+\frac{2(k-s)-d-1}{n},\\
R^{s'}G_0^gT^s,& \text{ if }\, s'-s=j-g,\;& k-k'=s-s'-j &\text{ and }& \d=\frac{2s+1-g}{n},\\
 R^{s'}\ccL_\ell T^s,& \text{ if }\, s'-s=j-\ell,\; &k-k'=2(\ell-j) &\text{ and }& \d=\half+\frac{k-\ell}{n},
\end{array}
\right.
$$
\end{itemize}
where $G_0=\Pi_0\circ G$ with $\Pi_0$ the projection on traceless symbols and $\ccL_\ell=\Delta^\ell+a_1GD\Delta^{\ell-1}+\cdots+a_\ell G^\ell D^\ell$ for given real coefficients $a_1,\ldots,a_\ell$. 
\end{thm} 
Global expression for divergence and gradient operators can be find in \cite{DSh11}, and we refer to \cite{Wun86} for $\ccL_1$.

\subsection{Conformally equivariant quantization}
Let $\l,\m\in\bbR$ and $\d=\m-\l$. We call quantization the linear isomorphisms 
$$
\cQlm:\cS^\d\rightarrow\Dlm,
$$
which are right inverses of the principal symbol map on homogeneous symbols. This means $\sigma_k\circ\cQlm=\Id$ on $\cS^\d_k$ for all $k\in\bbN$. 

Let $\fkh$ be a subalgebra of $\Vect(M)$. Since both $\cS^\d$ and $\Dlm$ are $\Vect(M)$-modules, one can look for $\fkh$-equivariant quantization, i.e.\ maps $\cQlm$ which intertwine the $\fkh$-action. There are no such map if $\fkh=\Vect(M)$ as proved in \cite{LOv00}. On a conformally flat manifold, one can choose $\fkh=\fkg$, the conformal Lie algebra. As proved in  \cite{DLO99}, there exists a unique $\fkg$-equivariant quantization $\cQlm$ for generic values of $\d=\m-\l$.

The exceptional values of $\d$ leading to a non-unique or a non-existing conformally equivariant quantization have been classified in \cite{Sil09,Mic11a}. In particular, $\d=0$ is not an exceptional value. 
\begin{thm}\cite{Sil09,Mic11a}\label{ExistUnique}
The conformally equivariant quantization exists and is unique on $\cS^\d_{k,s}$ if and only if there is no conformally invariant differential operators from $\cS^\d_{k,s}$ to $\cS^\d$, i.e.\ for $\d\notin I_{k,s}^D\amalg \left(I_{k,s}^G\cup I_{k,s}^L\right)$ where
\begin{align} \nonumber
I^D_{k,s}&=\left\{1+\frac{2(k-s)-d-1}{n}\left|\; d\in\llbracket 1,k-2s\rrbracket \right.\right\},\\[3pt] \label{Igdl}
I^G_{k,s}&=\left\{\frac{2s+1-g}{n}\left|\; g\in\llbracket 1,s\rrbracket \right.\right\},\qquad 
I^L_{k,s}=\left\{\half+\frac{k-l}{n}\left|\; l\in\llbracket 1,s\rrbracket \right.\right\}.
\end{align}
\end{thm}
Explicit formul\ae{} are available for the conformally equivariant quantization.
As an example, we recall the one obtained by Radoux \cite{Rad09} on the space $\cS^\d_{*,0}=\bigoplus_{k\in\bbN}\cS^\d_{k,0}$ of traceless symbols. It relies on the divergence operator and the normal ordering, which are locally defined by  $D=\partial_i\partial_{p_i}$ and  $\cN:S^{i_1\cdots i_k}(x)p_{i_1}\cdots p_{i_k}\mapsto S^{i_1\cdots i_k}(x)\partial_{i_1}\cdots \partial_{i_k}$.
\begin{prop}\cite{Rad09}
Let $\d\notin\{1+\frac{2k-1-m}{n} \vert\,m=1,\ldots,k\}$, $\l\in\bbR$ and $\m=\l+\d$. On the space $\cS^\d_{k,0}$ of traceless symbols of degree $k$, the conformally equivariant quantization is given by 
\begin{equation}\label{ExplicitQ}
\cQlm=\cN\circ\left(\sum_{m=0}^{k}c^k_m D^m\right), 
\end{equation}
with $c^k_0=1$ 
and
$
c^k_{m}=\frac{k-m+n\l}{m(2k-m-1+n(1-\d))}\,c^k_{m-1},
$
for $m=1,\ldots,k$. 
\end{prop}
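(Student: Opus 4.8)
The plan is to identify the operator $\cQ:=\cN\circ\sum_{m=0}^k c^k_m D^m$ defined by the stated formula with the conformally equivariant quantization on $\cS^\d_{k,0}$, whose existence and uniqueness under the hypothesis $\d\notin\{1+\frac{2k-1-m}{n}\mid m=1,\ldots,k\}$ is already granted by Theorem \ref{ThmQCE} together with Proposition \ref{ExistUnique}. By the very characterization of $\cQlm$, it therefore suffices to check two things: that $\cQ$ is a right inverse of the principal symbol map, and that $\cQ$ is a morphism of $\fkg$-modules. The first point is immediate: on a traceless symbol of degree $k$, each $\cN\circ D^m$ is a differential operator of order $k-m$, so only the term $m=0$ contributes to the principal symbol, and $c^k_0=1$ ensures $\sigma_k\circ\cQ=\Id$ on $\cS^\d_{k,0}$. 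It remains to establish $\fkg$-equivariance, which will pin down the $c^k_m$ uniquely.

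To handle equivariance, I would split $\fkg\simeq\ro(p+1,q+1)$, realized as conformal Killing fields on the flat model, into its affine part $\aff=\bbR^n\rtimes\gl(n)$ (translations and linear fields) and the inversion generators. The divergence $D$ and the normal ordering $\cN$ are built from $\aff$-invariant tensorial data, so the ansatz $\cN\circ\sum_m c^k_m D^m$ is automatically $\aff$-equivariant for \emph{any} constants $c^k_m$. Since $\fkg$ is generated by $\aff$ and the inversions, and the inversions form a single irreducible $\so(p+1,q+1)$-module, it is enough to impose equivariance for one inversion vector field $X$, say the quadratic field dual to a fixed null direction. On traceless symbols the only degree-lowering natural operators available are the iterated divergences $D^m$, so no $R=\eta^{ij}p_ip_j$ or gradient contributions can enter; this is precisely why the ansatz has the above form.

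The computation then reduces to evaluating the commutator $\LDX\circ\cQ-\cQ\circ L_X$ on $\cS^\d_{k,0}$, where $\LDX=\Lm(\cdot)-(\cdot)\Ll$. Because $X$ is quadratic in the base coordinates, commuting $\LDX$ through $\cN$ and through each power of $D$ produces, beyond the affine transformation of the symbol, an anomaly term that lowers the differential order by one more unit and is again of the form $\cN\circ D^{m+1}$. Collecting the contributions order by order yields a two-term recurrence relating $c^k_m$ and $c^k_{m-1}$: the weight $\l$ enters through $\Ll$ as the factor $k-m+n\l$ in the numerator, while the density shift $\d$ and the degree $k$ combine into the factor $m(2k-m-1+n(1-\d))$ in the denominator. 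Solving this recurrence from $c^k_0=1$ gives exactly the asserted coefficients. The vanishing of the denominator at $2k-m-1+n(1-\d)=0$ is precisely the excluded set $\d=1+\frac{2k-1-m}{n}$, in agreement with the exceptional values of Proposition \ref{ExistUnique}.

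The main obstacle is the bookkeeping in this single inversion computation: one must commute the Lie derivative of a quadratic field past $\cN$ and $D^m$ while keeping careful track of how the weight $n\l$, the shift $n(1-\d)$ and the degree $k$ accumulate into the combinatorial coefficients, and one must use the tracelessness condition $TP=0$ to discard the spurious trace terms that would otherwise couple $\cN\circ D^m$ to gradient operators and break the two-term form of the recurrence. Once these cancellations are controlled, matching coefficients is routine.
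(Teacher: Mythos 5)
The paper offers no proof of this proposition at all: it is imported verbatim from the cited reference [Rad09], so there is no internal argument to compare you against, and your proposal must be judged as a reconstruction of the flat-model proof from that literature. Judged so, your overall strategy is the correct and standard one: by Theorem \ref{ThmQCE} and Proposition \ref{ExistUnique} it suffices to check that the explicit operator is a right inverse of the principal symbol map (immediate, as you say, since $\cN\circ D^m$ lowers the order by $m$) and that it is $\fkg$-equivariant; equivariance under the affine part is automatic because $\cN$ and $D$ are built from affine-invariant data, and equivariance under a single inversion then propagates to all of them by irreducibility, after which the coefficients are forced by a recurrence. Two imprecisions should be repaired, though neither is fatal. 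First, the inversions $\fkg_1$ form an irreducible module over $\fkg_0\simeq\co(p,q)$, \emph{not} over $\ro(p+1,q+1)$; indeed $\fkg_1$ is not even an $\ro(p+1,q+1)$-submodule of $\fkg$. The argument you need is that the defect map $X\mapsto\LDX\circ\cQ-\cQ\circ\LdX$ is linear and $\fkg_0$-equivariant (which your affine equivariance supplies), so its kernel is a $\fkg_0$-submodule of $\fkg_1$, hence all of $\fkg_1$ once it is nonzero. Second, the anomaly created by commuting the Lie derivative of a fixed inversion $X_a$ past $\cN$ and past the powers of $D$ cannot literally be ``of the form $\cN\circ D^{m+1}$'': it carries the free index $a$. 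A direct computation shows that, on traceless symbols, both $\LDX\circ\cN-\cN\circ\LdX$ and $[\LdX,D]$ are proportional to the contraction $\eta_{ab}\partial_{p_b}$ (the genuinely dangerous terms, namely $p_a\,T$ and the $\eta_{ij}$-part of $\partial_i\partial_jX_a^l$, are exactly the ones killed by $TP=0$, as you anticipate). Since $\eta_{ab}\partial_{p_b}$ commutes with $D$ and preserves tracelessness, the total defect of $\sum_m c^k_m\,\cN\circ D^m$ decomposes along the independent operators $\cN\circ\eta_{ab}\partial_{p_b}\circ D^m$, and annihilating each coefficient yields precisely the two-term recurrence you state, whose vanishing denominators reproduce the excluded set $\d=1+\frac{2k-1-m}{n}$. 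With these corrections your outline is a faithful and workable version of the standard proof; what remains is the routine extraction of the constants $k-m+n\l$ and $m(2k-m-1+n(1-\d))$ from that computation, which you assert rather than derive but which presents no obstacle of principle.
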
 
The conformal equivariance of $\cQlm$ implies that it is globally well-defined over conformally flat manifolds. In the general case, including symbols with non-vanishing trace, fully explicit formul\ae{} are known only for symbols up to the degree $3$ in momenta variables $p$ \cite{DOv01,Lou03}. Moreover, \v{S}ilhan has obtained an expression for the conformally equivariant quantization in the curved case on all symbols, in terms of the tractor calculus \cite{Sil09}. 

We will need an extra statement on the conformally equivariant quantization, which is not in the literature but can be straightforwardly deduced from~\cite{DLO99}.
\begin{prop}\label{CEQ-ssmod}
Let $\l,\m\in\bbR$ with $\d=\m-\l$ and let $\cE$ be a $\fkg$-submodule of $\cS^\d$. For a  shift $\d\notin \frac{1}{n}\bbN^*$, there exists a unique $\fkg$-equivariant quantization $\cQlm:\cE\rightarrow\Dlm$.  
\end{prop}

\subsection{Conformally equivariant graded star product}\label{ParStarpro}
Let us start with standard definitions. The algebra of symbols $\cS^0$ is commutative and graded, moreover, as a subalgebra of $\Cinfty(T^*M)$, it carries a Poisson bracket denoted by $\{\cdot,\cdot\}$. A graded (or homogeneous) star product on $\cS^0$ is an associative $\bbC[[\hbar]]$-linear product $\star$ on $\cS^0\otimes\bbC[[\hbar]]$, with $\hbar$ a formal parameter. For $S_1,S_2\in\cS^0$, it is of the form $S_1\star S_2=\sum_{m\in\bbN}\left(\bi\hbar\right)^mB_m(S_1,S_2)$ and satisfies:
\begin{enumerate}
\item $B_0(S_1,S_2)=S_1S_2$,
\item $B_1(S_1,S_2)-B_1(S_2,S_1)=\{S_1,S_2\}$,
\item for all integers $k,l,m$, $B_m:\cS^0_k\otimes\cS^0_l\rightarrow\cS^0_{k+l-m}$ is a bilinear operator.
\end{enumerate} 
A frequently required extra property is the symmetry (or parity) of the star product, namely $B_m(S_1,S_2)=(-1)^mB_m(S_2,S_1)$ for all integers $m$, or equivalently $\overline{S_1\star S_2}=\overline{S_2}\star\overline{S_1}$, where $\overline{\cdot}$ is the complex conjugation. 

Let us introduce three maps: the $\bbC[[\hbar]]$-linear map $\Im:\cS^0\otimes\bbC[[\hbar]]\rightarrow \cS^0\otimes\bbC[[\hbar]]$ defined by $(\bi\hbar)^k\Id$ on $\cS^0_k$, the $\bbC[[\hbar]]$-linear extension $\cQ^\l\otimes\Id:\cS^0\otimes\bbC[[\hbar]]\rightarrow\D^{\l,\l}\otimes\bbC[[\hbar]]$  of some quantization $\cQ^\l$ and the composition $\cQ^\l_\hbar=(\cQ^\l\otimes\Id)\circ \Im$. We denote by $^*$ the adjoint operation with respect to the Hermitian product $(\phi,\psi)=\int_M\overline{\phi}\psi$, defined on complex compactly supported half-densities.
\begin{prop}
The product $\star^\l$ defined by
\begin{equation}\label{Def:star}
S_1\star^\l S_2=(\cQ^\l_\hbar)^{-1}\big(\cQ^\l_\hbar(S_1)\circ\cQ^\l_\hbar(S_2)\big),\qquad \forall S_1,S_2\in\cS^0\otimes\bbC[[\hbar]],
\end{equation}
is a graded star product on $\cS^0$. If the quantization satisfies $\cQ^{\frac{1}{2}}_\hbar(\overline{S})=\cQ^{\frac{1}{2}}_\hbar(S)^*$ for all $S\in\cS^0$, then the star product $\star^{\frac{1}{2}}$ is symmetric.
\end{prop}
\begin{proof} 
These results are classical. Using the property $\left(\sigma_k\circ\cQ^\l_\hbar\right)_{|\cS_k(M)}=(\bi\hbar)^k\Id$ of the quantization $\cQ^\l_\hbar$ and the two properties \eqref{sigma_product} and \eqref{sigma_poisson} of the principal symbol maps, one easily proves that $\star^\l$ is a graded star product. If the quantization satisfies $\cQ^{\frac{1}{2}}_\hbar(\overline{S})=\cQ^{\frac{1}{2}}_\hbar(S)^*$ for all $S\in\cS^0$, we deduce that $\overline{S_1\star^{\frac{1}{2}} S_2}=\overline{S_2}\star^{\frac{1}{2}}\overline{S_1}$  for all symbols $S_1,S_2$, thanks to the equalities
 $\cQ^{\frac{1}{2}}_\hbar(\overline{S_1\star^{\frac{1}{2}} S_2})=\left(\cQ^{\frac{1}{2}}_\hbar(S_1)\circ\cQ^{\frac{1}{2}}_\hbar(S_2)\right)^*=\cQ^{\frac{1}{2}}_{\hbar}(\bar{S_2}\star^{\frac{1}{2}}\bar{S_1})$.
\end{proof}
The action of $X\in\Vect(M)$ on $\cS^0\subset \Cinfty(T^*M)$ is given by the Hamiltonian derivation $\{\mu_X,\cdot\}$,  where $\mu_X=X^ip_i$. From a star product $\star$ on $\cS^0$, we can define a new action of $\Vect(M)$ on~$\cS^0\otimes\bbC[[\hbar]]$ via the star bracket, i.e$.$ $X\in\Vect(M)$ acts on $S\in\cS^0\otimes\bbC[[\hbar]]$ by $[\mu_X,S]_\star=\mu_X\star S- S\star\mu_X$. Let $\fkh$ be a subalgebra of $\Vect(M)$. The star product is said $\fkh$-equivariant (or strongly $\fkh$-invariant) if both induced $\fkh$-actions coincide, namely $[\mu_X,S]_\star=\bi\hbar\{\mu_X,S\}$ for all $X\in\fkh$ and all $S\in\cS^0$. As one can expect, conformally equivariant quantizations give rise to $\fkg$-equivariant star products.

\begin{prop}\cite{DLO99, DEO04}\label{ConfStar}
Let $(M,[\mg])$ be a conformally flat manifold. The star product $\star^\l$ induced by the conformally equivariant quantization $\cQ^{\l,\l}$ via equation \eqref{Def:star} is a graded $\fkg$-equivariant star product on $\cS^0$. It is symmetric if and only if $\l=\half$.
\end{prop}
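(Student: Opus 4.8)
The plan is to read every assertion off the single defining formula \eqref{Def:star}, exploiting that $\star_\l$ is by construction the pullback of the associative composition of differential operators by the $\bbC[[\hbar]]$-linear isomorphism $\cQ_\hbar=(\cQ^{\l,\l}\otimes\Id)\circ\Im$. Associativity of $\star_\l$ is then automatic. For the three axioms of a graded star product the key bookkeeping is the interplay of $\Im$ with the grading: if $P\in\cS^0_k$ and $Q\in\cS^0_l$ then $\cQ_\hbar(P)\circ\cQ_\hbar(Q)=(\bi\hbar)^{k+l}\,\cQ^{\l,\l}(P)\circ\cQ^{\l,\l}(Q)$, and since $\cQ^{\l,\l}$ is a right inverse of the principal symbol map it is a filtered isomorphism, so one may write $\cQ^{\l,\l}(P)\circ\cQ^{\l,\l}(Q)=\sum_{j\geq0}\cQ^{\l,\l}(B_j(P,Q))$ with $B_j(P,Q)\in\cS^0_{k+l-j}$. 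Applying $\cQ_\hbar^{-1}$ one recovers exactly $P\star_\l Q=\sum_j(\bi\hbar)^jB_j(P,Q)$, which is axiom (3). Axiom (1) follows because the principal symbol of a composition is the product of principal symbols, so $B_0(P,Q)=PQ$; axiom (2) is the standard fact that for any right inverse of the principal symbol map the degree $k+l-1$ part of the commutator is the Poisson bracket, whence $B_1(P,Q)-B_1(Q,P)=\{P,Q\}$.

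Next I would prove strong $\fkg$-invariance. The one input is that $\cQ^{\l,\l}$ quantizes the Hamiltonian $\mu_X=X^ip_i$ of a conformal Killing field $X$ into the Lie derivative $\Ll=X+\l\Div X$ on $\l$-densities, the first-order symmetry recalled in the introduction, which is forced by equivariance together with the right-inverse property. Since $\Im$ preserves the grading and $\fkg$ acts on $\cS^0$ by grading-preserving Lie derivatives, $\cQ_\hbar$ is a morphism of $\fkg$-modules intertwining the Lie derivative $L$ on $\cS^0\otimes\bbC[[\hbar]]$ with the adjoint action $[\Ll,\,\cdot\,]$ on $\D^{\l,\l}\otimes\bbC[[\hbar]]$ (recall $\mu=\l$). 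As $\mu_X\in\cS^0_1$ we have $\cQ_\hbar(\mu_X)=\bi\hbar\,\Ll$, hence for every $P$,
\[
[\mu_X,P]_\star=\cQ_\hbar^{-1}\bigl([\bi\hbar\,\Ll,\cQ_\hbar(P)]\bigr)=\bi\hbar\,\cQ_\hbar^{-1}\bigl(\cQ_\hbar(L_XP)\bigr)=\bi\hbar\,L_XP=\bi\hbar\{\mu_X,P\},
\]
the last equality being the identification of the Lie derivative of $X$ on $\cS^0\subset\Cinfty(T^*M)$ with the Hamiltonian action $\{\mu_X,\cdot\}$. This is precisely conformal equivariance of $\star_\l$.

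For the symmetry I would pass through the formal transpose. Let $A\mapsto A^{t}$ be the transpose of a differential operator for the pairing $\cF^\l\otimes\cF^{1-\l}\to\bbR$ given by integration; it is an anti-automorphism with $\sigma_k(A^{t})=(-1)^k\sigma_k(A)$, and a one-line integration by parts gives $(\Ll)^{t}=-\ell_X^{1-\l}$. Then $P\mapsto(-1)^k\cQ^{\l,\l}(P)^{t}$ on $\cS^0_k$ is again a right inverse of the principal symbol map, and the relation $(\Ll)^{t}=-\ell_X^{1-\l}$ shows it is $\fkg$-equivariant into $\D^{1-\l,1-\l}$; by the uniqueness in Theorem~\ref{ThmQCE} it must coincide with $\cQ^{1-\l,1-\l}$, i.e.
\[
\cQ^{\l,\l}(P)^{t}=(-1)^k\,\cQ^{1-\l,1-\l}(P),\qquad P\in\cS^0_k .
\]
Transposing the identity $\cQ^{\l,\l}(P)\cQ^{\l,\l}(Q)=\sum_j\cQ^{\l,\l}(B^\l_j(P,Q))$ defining the $B^\l_j$ and using this together with the injectivity of $\cQ^{1-\l,1-\l}$ yields the comparison
\[
B^\l_j(P,Q)=(-1)^j\,B^{1-\l}_j(Q,P).
\]
Combining it with the same relation after exchanging $P$ and $Q$, one sees that $\star_\l$ is symmetric (that is $B^\l_j(P,Q)=(-1)^jB^\l_j(Q,P)$) if and only if $B^\l_j=B^{1-\l}_j$ for all $j$, i.e.\ if and only if $\star_\l=\star_{1-\l}$. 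In particular $\l=\half$ forces symmetry immediately; note that at $\l=\half$ the transpose $^t$ restricts to the adjoint $^*$ on real symbols, recovering the reformulation $\cQ_\hbar(\overline P)=\cQ_\hbar(P)^*$ quoted above.

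The remaining, and hardest, point is the converse: that $\star_\l\neq\star_{1-\l}$ whenever $\l\neq\half$, so that symmetry fails. Here I would analyse the symmetric part $s_\l(P,Q)=\tfrac12\bigl(B^\l_1(P,Q)+B^\l_1(Q,P)\bigr)$ of the first coefficient; the comparison at $j=1$ already gives $s_\l=-s_{1-\l}$, so it suffices to show $s_\l\not\equiv 0$ for $\l\neq\half$. By construction $s_\l$ is a conformally invariant, symmetric bidifferential operator $\cS^0_k\otimes\cS^0_l\to\cS^0_{k+l-1}$, so Theorem~\ref{ConfInv} pins it down, on the relevant components, as a scalar multiple of a fixed nonzero invariant operator built from the divergence $D$; a direct order-one computation with the explicit quantization formula — for instance on $P=\mu_X$, $Q=\mu_Y$ with $\cQ^{\l,\l}(\mu_X)=\Ll$, where the $\l$-dependence enters through the coefficient $c^2_1=\tfrac{1+n\l}{2+n}$ of the first correction to $\cQ^{\l,\l}(\mu_X\mu_Y)$ — identifies this scalar as an affine function of $\l$ proportional to $2\l-1$. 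This forces $s_\l\neq 0$ for $\l\neq\half$, hence $\star_\l\neq\star_{1-\l}$ and $\star_\l$ is not symmetric. This elementary but unavoidable order-one calculation is the main obstacle, everything else being formal.
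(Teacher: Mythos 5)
The paper does not actually prove this proposition: it is imported from \cite{DLO99,DEO04}, and the only hint given is the criterion, stated in Paragraph~\ref{ParStarpro}, that for $\l=\half$ symmetry is equivalent to $\cQ_\hbar(\overline{P})=\cQ_\hbar(P)^*$. Your reconstruction is essentially the right one, and it matches the mechanism underlying the cited results. The verification of the graded-star-product axioms and of strong $\fkg$-invariance is purely formal, exactly as you present it (your key inputs, $\cQ^{\l,\l}(\mu_X)=\Ll$ and the intertwining property of $\cQ_\hbar$, are correct; the first follows from \eqref{ExplicitQ} with $c^1_1=\l$). Your transpose identity $\cQ^{\l,\l}(P)^{t}=(-1)^k\,\cQ^{1-\l,1-\l}(P)$ is also sound: the map $P\mapsto(-1)^k\cQ^{\l,\l}(P)^{t}$ is a $\fkg$-equivariant right inverse of the principal symbol map into $\D^{1-\l,1-\l}$, and since $\d=0$ is a generic shift, uniqueness in Theorem~\ref{ThmQCE} applies; the consequence $B^\l_j(P,Q)=(-1)^jB^{1-\l}_j(Q,P)$ and hence symmetry at $\l=\half$ follow correctly. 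This is precisely the duality that produces the self-adjointness criterion quoted in the paper.

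Two caveats on your converse direction. First, the appeal to Theorem~\ref{ConfInv} to ``pin down'' the symmetric part $s_\l$ of $B^\l_1$ is not licensed: that theorem classifies conformally invariant \emph{linear} operators $\cS^\d_{k,s}\rightarrow\cS^{\d'}_{k',s'}$, not bilinear or bidifferential ones, so it says nothing about $s_\l$; you should simply delete that sentence. Second, the decisive computation is asserted rather than performed; fortunately your predicted answer is correct, and a single example settles it. Take $X=\partial_1$ and $Y=x^1\partial_2$ on flat $\bbR^n$, so that $\mu_X\mu_Y=x^1p_1p_2$ is traceless, $\cQ^{\l,\l}(\mu_X)=\partial_1$, $\cQ^{\l,\l}(\mu_Y)=x^1\partial_2$, and by \eqref{ExplicitQ}, $\cQ^{\l,\l}(x^1p_1p_2)=x^1\partial_1\partial_2+c^2_1\partial_2$ with $c^2_1=\frac{1+n\l}{n+2}$. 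Then
\[
\cQ^{\l,\l}(\mu_X)\,\cQ^{\l,\l}(\mu_Y)+\cQ^{\l,\l}(\mu_Y)\,\cQ^{\l,\l}(\mu_X)-2\,\cQ^{\l,\l}(\mu_X\mu_Y)
=(1-2c^2_1)\,\partial_2=\frac{n(1-2\l)}{n+2}\,\cQ^{\l,\l}(p_2),
\]
whence $B^\l_1(\mu_X,\mu_Y)+B^\l_1(\mu_Y,\mu_X)=\frac{n(1-2\l)}{n+2}\,p_2$. This is nonzero whenever $\l\neq\half$, so $B^\l_1$ is not antisymmetric and $\star_\l$ is not symmetric. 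With this replacement for your last paragraph, the proof is complete.
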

It is easy to prove that all graded $\fkg$-equivariant star products on $\cS^0$ arise in that way.

\section{Classification of the higher symmetries of the conformal powers of the Laplacian}

The aim of this section is to show how conformally equivariant quantization sheds new light on the determination of higher symmetries of conformal Laplacian, initiated by Eastwood~\cite{Eas05} and pursued in \cite{ELe08} and \cite{GSi09} for conformal powers of the Laplacian, in the conformally flat case. In all this section we work over a conformally flat manifold $(M,[\mg])$ of dimension $n\geq 3$ and $P_\ell$ denotes the $\ell^{\text{th}}$ conformal power of the Laplacian, pertaining to $\Dlm$ for values of the weights henceforth fixed to $\l=\frac{n-2\ell}{2n}$, $\m=\frac{n+2\ell}{2n}$.

\subsection{Definition of higher symmetries of $P_\ell$}
Let $\l'\in\bbR$ and $(P_\ell)=\{DP_\ell \vert \, D\in\D^{\m,\l'}\}$ be the left ideal generated by $P_\ell$ in $\D^{\l,\l'}$, with either $\l'=\l$ or $\l'=\m$, depending on the context.
\begin{defi}\label{defiAl}
The space of higher symmetries of $P_\ell$ is
$$
\cAl=\{D_1\in\D^{\l,\l} \text{ such that } \exists D_2\in\D^{\m,\m}, P_\ell D_1=D_2P_\ell\}/(P_\ell).
$$
\end{defi} 
If $D_1=DP_\ell$, with $D\in\D^{\m,\l}$, the equality $P_\ell D_1= (P_\ell D)P_\ell$ holds. Hence, all elements $D_1\in(P_\ell)$ satisfy the relation $P_\ell D_1=D_2P_\ell$ and the quotient defining $\cAl$ is well-defined.
 
Clearly, $\cAl$ is a subalgebra of $\D^{\l,\l}/(P_\ell)$ and coincides with the kernel of the conformally invariant map
\begin{eqnarray}\nonumber
\HSQ:\D^{\l,\l}/(P_\ell) & \rightarrow & \D^{\l,\m}/(P_\ell)\\ \label{HSQ}
 \left[D\right] &\mapsto & [P_\ell D]
\end{eqnarray}
where $\HSQ$ stands for \textit{Quantum Higher Symmetries} and $[D]=D+(P_\ell)$.
\begin{rmk}
Resorting to conformal coordinates, higher symmetries prove to be locally the same on flat and conformally flat manifolds, but global existence can nevertheless be problematic in this more general setting. We do not address this issue and work only locally.
\end{rmk}
\begin{ex}
The higher symmetries of $P_\ell$ given by first order differential operators are the constants, acting by multiplication as zero order differential operators, and the Lie derivatives $L_X^\l$ for $X\in\fkg$. In accordance with Proposition~\ref{SDinv}, we have indeed $P_\ell L_X^\l=L_X^\m P_\ell$.
\end{ex}

\subsection{Symmetries of the null geodesic flow and generalizations}\label{Par:SymCl}

Choosing a metric $\mg\in[\mg]$, we can regard $P_1$ as acting on functions. Then, $D_1\in\D_k(M)$ is a higher symmetry if there exists $A\in\D(M)$ such that $[P_1,D_1]=AP_1$. Applying the principal symbol map and using  \eqref{sigma_poisson}, we get
$$
\{R,\sigma_k(D_1)\}\in(R),
$$ 
where $R=\sigma_2(P_1)$ (see Proposition \ref{SDinv}) and $(R)$ is the ideal generated by $R$ in $\cS^0$. Consequently, $\sigma_k(D_1)$ is constant along the Hamiltonian flow of $R$, on the level set $R=0$. Via the isomorphism $TM\cong T^*M$, provided by the metric $\mg$, this flow identifies with the null geodesic flow so that $\sigma_k(D_1)$ is a constant along the null geodesics. Thus, $\sigma_k(D_1)$ is a conformal Killing tensor. We recall their definition, using round bracket for symmetrization of indices and the partial derivatives  $(\partial_i)$ associated to local conformal coordinates $(x^i)$. Moreover, $L$ is an arbitrary tensor and $G_0=\Pi\circ G$ (see Theorem \ref{ConfInv}).
\begin{defi}
A conformal Killing $k$-tensor $K$ is defined equivalently as
\begin{itemize}
\item A symmetric traceless tensor of order $k$ s.t. $\partial_{(i_0}K_{i_1\cdots i_k)}=\mg_{(i_0i_1}L_{i_2\cdots i_k)}$,
\item A traceless symbol of degree $k$ satisfying $\{R,K\}\in (R)$,
\item  A traceless symbol of degree $k$ in the kernel of $G_0$.
\end{itemize}
\end{defi}
Easy computations lead to the equivalence between the three assertions. As for higher symmetries, we are not concerned by global existence questions and work locally.
For $k=1$, we recover the notion of conformal Killing vectors whose space identifies to the Lie algebra~$\fkg$. The conformal Killing tensors of higher orders correspond to transformations of the phase space $T^*M$ not preserving the configuration manifold $M$.
Besides, the space of conformal Killing $k$-tensors is a finite-dimensional representation of $\fkg$ which turns to be irreducible, as a consequence of Lepowsky's  generalization \cite{Lep77} of the Bernstein-Gelfand-Gelfand resolution.
We can generalize this picture to tensors (or symbols) with trace, using the conformal invariance of $G_0^{2s+1}T^s$ on $\cS^0_{k,s}$. The following definition is due to Nikitin and Prilipko \cite{NPr90}.
\begin{defi}\label{gCKT}
A $s$-generalized conformal Killing $k$-tensor $K$ is defined equivalently as
\begin{itemize}
\item A symmetric traceless tensor of order $(k-2s)$ s.t. $\partial_{(i_0}\cdots\partial_{i_{2s}}K_{i_{2s+1}\cdots i_{k})}=\mg_{(i_0i_1}L_{i_2\cdots i_k)}$,
\item  A symbol $R^sK\in\cS^0_{k,s}$ which is in the kernel of $G_0^{2s+1}T^s$.
\end{itemize}
\end{defi}
The equivalence of the two assertions relies on the equality $T^s(R^sK)=cK$, where $c$ is a constant. Again, the space of $s$-generalized conformal Killing tensors of order $k$ is an irreducible $\fkg$-module. We denote this subspace of $\cS^0_{k,s}$ by $\cK_{k,s}$ and set $\cK_{*,s}=\bigoplus_{k\geq 2s}\cK_{k,s}$,
\begin{equation}\label{cKl}
\cK=\bigoplus_{s\in\bbN}\cK_{*,s}\qquad\text{and}\qquad \cK^\ell=\cK/(R^\ell)\simeq\bigoplus_{s=0}^{\ell-1}\cK_{*,s}.
\end{equation}
\begin{rmk}
Killing $k$-tensors are symmetric tensors satisfying $\partial_{(i_0}K_{i_1\cdots i_k)}=0$, or equivalently symbols of degree $k$ satisfying $\{R,K\}=0$. One can easily check that Killing tensors are elements of $\cK$.
\end{rmk}

\subsection{From classical to quantum symmetries}

Here, we state and prove our first result : the conformally equivariant quantization $\cQ^{\l,\l}$ establishes a bijection between the two spaces of symmetries $\cAl$ and $\cK^\ell$. The existence of a $\fkg$-module isomorphism between $\cAl$ and $\cK^\ell$ was established for the first time in \cite{GSi09} via different methods. We assume that $\ell\in\bbN^*$, $\l=\frac{n-2\ell}{2n}$ and $\mu=\frac{n+2\ell}{2n}$. To state our theorem we need the following
\begin{lem}\label{lem-CEQ-quotient}
Let $(R^\ell)$ be the left ideal generated by $R^\ell$ in $\cS^0$ and $(P_\ell)$ be the ideal generated by $P_\ell$ in $\D^{\l,\l}$. The conformally equivariant quantization satisfies $\cQ^{\l,\l}((R^\ell))=(P^\ell)$ and induces then an isomorphism of $\fkg$-modules
\begin{equation}\label{CEQ-quotient}
\cQ^{\l,\l}:\cS^0/(R^\ell)\rightarrow\D^{\l,\l}/(P^\ell).
\end{equation}
Abusing notation, we call it a conformally equivariant quantization and denote it by $\cQ^{\l,\l}$.
\end{lem}
\begin{proof}
The map $S R^\ell\mapsto \cQ^{\m,\l}(S) P_\ell$ is conformally equivariant on $(R^\ell)$ and provides a right inverse to the principal symbol map on homogeneous symbols. By Proposition \ref{CEQ-ssmod}, this map coincides with $\cQ^{\l,\l}$.
\end{proof}
\begin{thm}\label{THM}
 The conformally equivariant quantization as in \eqref{CEQ-quotient} induces an isomorphism of {$\fkg$-modules} $\cQ^{\l,\l}:\cK^\ell\rightarrow \cAl$, identifying higher symmetries of $P_\ell$ with $s$-generalized conformal Killing tensors for $s<\ell$. Moreover, every $K\in\cK$ satisfies $P_\ell\cQ^{\l,\l}(K)=\cQ^{\m,\m}(K)P_\ell$.
\end{thm}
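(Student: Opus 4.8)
The plan is to reduce the global statement of Theorem \ref{THM} to a statement about each irreducible graded piece, and then to identify the relevant maps as the conformally invariant operators classified in Theorem \ref{ConfInv}. Concretely, I would first set up the two key diagrams. On the classical side, the space $\cK^\ell=\bigoplus_{s<\ell}\cK_{*,s}$ is by Definition \ref{gCKT} the joint kernel of the family of conformally invariant operators $G_0^{2s+1}T^s$ acting on $\cS^0_{k,s}$; on the quantum side, the symmetry algebra $\cAl$ is the kernel of the map $\HSQ$ from \eqref{HSQ}. The strategy is to show that $\cQ^{\l,\l}$ intertwines these two kernel descriptions. Since $\cQ^{\l,\l}$ is a $\fkg$-module isomorphism $\cS^\d\to\Dlm$ at the level of the full spaces (for generic shift, and with the relevant trace components handled by Proposition \ref{ExistUnique}), it suffices to prove that it carries the invariant operator cutting out $\cK^\ell$ to the invariant map cutting out $\cAl$, so that it restricts to an isomorphism of the kernels.

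The heart of the argument is the \emph{second} sentence of the statement: every local section $P\in\cK$ satisfies $\Delta^\ell\cQ^{\l,\l}(P)=\cQ^{\m,\m}(P)\Delta^\ell$. I would prove this by a conformal-invariance-plus-uniqueness argument rather than by direct computation. First observe that both sides, as functions of $P$, define $\fkg$-equivariant maps $\cK\to\D^{\l,\m}$: the left multiplication by $\Delta^\ell$ intertwines $\LD[\l,\l]$ with $\LD[\m,\m]$-type actions precisely because $\Delta^\ell$ is a conformal invariant of weight $(\l,\m)$ by Proposition \ref{SDinv}, and $\cQ^{\l,\l}$, $\cQ^{\m,\m}$ are $\fkg$-equivariant by Theorem \ref{ThmQCE}. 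Thus the difference
\begin{equation}\label{proofdiff}
P\longmapsto \Delta^\ell\cQ^{\l,\l}(P)-\cQ^{\m,\m}(P)\Delta^\ell
\end{equation}
is a conformally invariant operator from $\cK\subset\cS^0$ into $\D^{\l,\m}$, hence (after applying the principal symbol) corresponds to a conformally invariant symbol map whose source is a generalized conformal Killing tensor module. The point is then to check, degree by degree, that the target weight forces this map into the trivial slot of the classification in Theorem \ref{ConfInv}: on $\cK_{k,s}$ the operators $D$, $G_0$, $\ccL_\ell$ listed there cannot intertwine the given weights while landing in the kernel $\cK$ on which $G_0^{2s+1}T^s$ already vanishes, so the only invariant map available is zero. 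This gives \eqref{proofdiff} $\equiv 0$, i.e.\ the commutation relation, and as a byproduct shows that $\cQ^{\l,\l}(P)$ is a genuine higher symmetry, so $\cQ^{\l,\l}(\cK^\ell)\subseteq\cAl$.

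For the converse inclusion and the isomorphism claim, I would argue that $\cQ^{\l,\l}$ is already a bijection of the ambient spaces, so it is enough to see that its inverse sends $\cAl$ into $\cK^\ell$; this follows by taking principal symbols in $\Delta^\ell D_1=D_2\Delta^\ell$, which yields $\{R^\ell,\sigma(D_1)\}\in(R)$ and hence forces $\sigma(D_1)$ to be a generalized conformal Killing tensor, matching the computation already sketched in the paragraph preceding Definition \ref{gCKT}. Assembling the graded pieces and using that $\cK^\ell$ and $\cAl$ have, degree by degree, the same (finite) dimension then upgrades the inclusion to an equality and the restriction of $\cQ^{\l,\l}$ to an isomorphism. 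The main obstacle I anticipate is the bookkeeping in the invariance argument for \eqref{proofdiff}: one must verify that on each $\cK_{k,s}$ the \emph{specific} weights $\l=\frac{n-2\ell}{2n}$, $\m=\frac{n+2\ell}{2n}$ never coincide with any of the three weight conditions in Theorem \ref{ConfInv} that would permit a nonzero invariant map landing inside $\cK$, and to handle cleanly the non-uniqueness/exceptional shifts flagged after Theorem \ref{ThmQCE} by invoking Proposition \ref{ExistUnique} on the trace components $\cS^\d_{k,s}$ with $s\geq 1$. Once that weight analysis is done, the equivariance and uniqueness do all the remaining work and no explicit operator computation is required.
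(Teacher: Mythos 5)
Your overall strategy---conjugate the symmetry condition by equivariant quantizations so that it becomes a conformally invariant operator on symbols, then invoke the classification of Theorem \ref{ConfInv}---is the same as the paper's, and it does work in the regime where $\cQ^{\l,\m}$ exists ($n$ odd or $3\ell\leq\frac{n}{2}+1$). But two of your steps have genuine gaps. First, your converse inclusion fails. From $\Delta^\ell D_1=D_2\Delta^\ell$ one gets $\sigma(D_1)=\sigma(D_2)=:K$ and then $\{R,K\}\in(R)$; writing $K=\sum_s R^sK_s$ with $K_s$ traceless, one has $\{R,R^sK_s\}=R^s\{R,K_s\}\in(R)$ automatically, so this condition constrains \emph{only the traceless part} $K_0$. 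The defining condition of an $s$-generalized conformal Killing tensor for $s\geq 1$ (Definition \ref{gCKT}) involves $2s+1$ derivatives and is invisible at the level of the principal symbol, and the computation says nothing about the lower-order parts of $D_1$ either. Your fallback, that ``$\cK^\ell$ and $\cAl$ have, degree by degree, the same (finite) dimension,'' is circular: the dimension of $\cAl$ is exactly what the theorem computes. (You also never establish the factorization $\cQ^{\l,\l'}(PR^\ell)=\cQ^{\m,\l'}(P)\Delta^\ell$, which follows from uniqueness of the equivariant quantization on the submodule $(R^\ell)$ and is what makes $\cQ^{\l,\l}$ descend to the quotient by $(\Delta^\ell)$ at all.)

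Second, the weight bookkeeping you propose for the difference map $P\mapsto\Delta^\ell\cQ^{\l,\l}(P)-\cQ^{\m,\m}(P)\Delta^\ell$ does not come out as you anticipate. With source weight $\d=0$ and target weight $\d'=\frac{2\ell}{n}$ (so $j=\ell$), Theorem \ref{ConfInv} does \emph{not} leave only the zero slot: it allows $R^{\ell-s-1}G_0^{2s+1}T^s$ for every $s<\ell$ (harmless, since it vanishes exactly on $\cK$), but also, when $n$ is even and $\frac{n}{2}+(k-s)\leq\ell$, the operator $R^{\ell+s-k-\frac{n}{2}}\ccL_{k+\frac{n}{2}}T^s$, which has no reason to vanish on $\cK_{k,s}$. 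This is precisely the regime in which $\cQ^{\l,\m}$ fails to exist, so invoking Proposition \ref{ExistUnique} identifies the problem rather than solving it: one is forced to replace $(\cQ^{\l,\m})^{-1}$ by the (non-injective) principal symbol map, as in the paper's diagram \eqref{CHSsigma}. The paper then disposes of the $\ccL$-type operators by a structural argument for which you have no substitute: $\Gr\,\cAl\simeq\ker\HSC$ is a \emph{subalgebra} of $\cS^0/(R^\ell)$, so if its intersection with some $\cS^0_{k,s}$ were the kernel of an $\ccL$-type operator it would be infinite dimensional, and so would be its intersection with all higher degrees, contradicting the finite dimensionality established in large degree where only the $G_0$-type operator survives. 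Without this step (together with the irreducibility argument needed because the principal symbol map is not injective), your proof covers only the easy case; with it, your argument essentially becomes the paper's.
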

\begin{proof}
The idea of the proof is to use the conformally equivariant quantization to identify the kernel $\cAl$ of the operator $\HSQ$, see (\ref{HSQ}), with the one of an operator $\HSC$ on symbols, its name standing for {\it Classical Higher Symmetries}. 
As a consequence, we have to deal with the quotient algebras $\D^{\l,\l'}/(P_\ell)$ with $\l'=\l$ or $\l'=\m$. Clearly, the principal symbol maps descend as surjective maps $\sigma_k:\D^{\l,\l'}_k/(P_\ell)\rightarrow\bigoplus_{s=0}^{\ell-1}\cS^{\l'- \l}_{k,s}$ and, whenever it exists, the conformally equivariant quantization gives then an isomorphism of $\fkg$-modules, namely $\cQ^{\l,\l'}:\bigoplus_{s=0}^{\ell-1}\cS^{\l'- \l}_{*,s}\rightarrow\D^{\l,\l'}/(P_\ell)$, where $\cS^{\l'- \l}_{*,s}=\bigoplus_{k\in\bbN}\cS^{\l'- \l}_{k,s}$.

If $2\ell< \frac{n}{2}+1$, $\cQlm$ exists on $\bigoplus_{s=0}^{\ell-1}\cS^{\l'- \l}_{*,s}$, according to Proposition \ref{ExistUnique}. The operator $\HSC$ is then determined by the following commutative diagram of $\fkg$-modules 
\begin{equation}\label{HSQC}
\xymatrix{
\D^{\l,\l}/(P_\ell)\ar[rr]^{\HSQ} && \D^{\l,\m}/(P_\ell) \\
\bigoplus_{s=0}^{\ell-1}\cS^0_{*,s} \ar[u]^{\cQ^{\l,\l}}\ar[rr]_{\HSC}&& \bigoplus_{s=0}^{\ell-1}\cS^{\frac{2\ell}{n}}_{*,s}\ar[u]_{\cQ^{\l,\m}}
}
\end{equation}
Thus, $\HSC$ is a conformally invariant operator, and as such it should fit in the classification given in Theorem \ref{ConfInv}. On $\cS^0_{*,s}$, the operator $\HSC$ is then equal to~$R^{\ell-s-1}G_0^{2s+1}T^s$, up to a multiplicative constant. This constant cannot be zero since $\HSQ$ does not vanish on the image of $\cS^0_{*,s}$. Hence, by Definition~\ref{gCKT}, the kernel of $\HSQ$ is isomorphic to the space $\cK^\ell$ via the map $\cQ^{\l,\l}$.
\\

For arbitrary values of $\ell$, $\cQlm$ may not exist and the proof is more involved. The conformally invariant operator $\HSC$ is then defined via the following commutative diagram of $\fkg$-modules 
\begin{equation}\label{CHSsigma}
\xymatrix{
\D^{\l,\l}_k/(P_\ell)\ar[rr]^{\HSQ} && \D^{\l,\m}_{k'}/(P_\ell)\ar[d]^{\sigma_{k'}} \\
\cS^0_{k,s} \ar[u]^{\cQ^{\l,\l}}\ar[rr]_{\HSC}&& \cS^{\frac{2\ell}{n}}_{k'},
}
\end{equation}
where $k'\in\bbN$ is taken as small as possible, so that $\HSC$ does not vanish. According to Theorem \ref{ConfInv}, the operator $\HSC$ is proportional either to $R^{\ell-s-1}G_0^{2s+1}T^s$ or to $R^{\ell+s-k-\frac{n}{2}}\ccL_{k+\frac{n}{2}}T^s$, and the second case can occur only if $n/2+(k-s)\leq \ell$.

We prove that $\cQ^{\l,\l}(\ker\HSC)=\ker \HSQ$. Since $\cQ^{\l,\l}$ is a bijective linear map, we get  that $\cQ^{\l,\l}(\ker\HSC)$ contains $\ker \HSQ$. 
 We prove the converse inclusion. If $\HSC$ is proportional to $R^{\ell-s-1}G_0^{2s+1}T^s$, we obtain that $\cQ^{\l,\l}(\ker\HSC)= \ker\HSQ$  by irreducibility of the kernel of $\HSC$. 
 If $\HSC$ is proportionnal to $R^{\ell+s-k-\frac{n}{2}}\ccL_{k+\frac{n}{2}}T^s$, then $\HSC$ has for target space $\cS^{2\ell/n}_{k',s'}$ with $k'=2\ell-k-n$ and $s'=2\ell-2(k-s)-n$. According to Proposition \ref{ExistUnique}, $\cQlm$ exists on $\cS^{2\ell/n}_{k',s'}$, so that one gets the conformally invariant operator
$$
\left(\HSQ\circ\cQ^{\l,\l}-\cQlm\circ\HSC\right):\cS^{0}_{k,s}\rightarrow\D^{\l,\m}_{k''}/(P_\ell),
$$
with $k''<k'$. But the only conformally invariant operator $\cS^0_{k,s}\rightarrow\cS^{2\ell/n}_{k''}$ is zero, hence the latter operator vanishes and we get $\cQ^{\l,\l}(\ker\HSC)= \ker\HSQ$ in all cases. 

We prove that $\HSC$ is proportional to $R^{\ell-s-1}G_0^{2s+1}T^s$. Suppose it is not the case, then  $\HSC$ is proportionnal to $R^{\ell+s-k-\frac{n}{2}}\ccL_{k+\frac{n}{2}}T^s$ and its kernel is infinite dimensional. Since $\cQ^{\l,\l}(\ker\HSC)= \ker\HSQ$ for all $k,s$, the graded associated algebra to $\cAl$ satisfies $\Gr\cAl\simeq \ker\HSC$, and is then a subalgebra of $\cS^0/(R^\ell)$. If its intersection with $\cS^0_{k,s}$ is infinite dimensional, then its intersection with $\cS^0_{m}$ for $m\geq k$ is infinite dimensional also. But, as stated above, $\HSC$ is proportional to $R^{\ell-s'-1}G_0^{2s'+1}T^{s'}$ on $\cS^0_{m,s'}$, for all $s'$, if $m$ is big enough. The intersection of the kernel of $\HSC$ with $\cS^0_{m}$ is then finite dimensional. As a consequence, $\HSC$ cannot be proportionnal to $R^{\ell+s-k-\frac{n}{2}}\ccL_{k+\frac{n}{2}}T^s$

Combining the results of the two preceding paragraph, we get the desired correspondence between $\cK^\ell$ and $\cAl$ in the general case. 
\\

Now, we can define on~$\D^{\l,\l}$ a new conformally invariant operator $\HSQ_0:D\mapsto P_\ell D-\cQ^{\m,\m}\circ(\cQ^{\l,\l})^{-1}(D)P_\ell$. If $\cQlm$ exists, the commutative diagram 
$$
\xymatrix{
\D^{\l,\l}\ar[rr]^{\HSQ_0} && \D^{\l,\m} \\
\cS^0 \ar[u]^{\cQ^{\l,\l}}\ar[rr]
&& \cS^{\frac{2\ell}{n}}\ar[u]_{\cQ^{\l,\m}}
}
$$
leads to a non-vanishing conformally invariant operator on $\cS^0_{*,s}$, which, by Theorem \ref{ConfInv}, is proportional to the same operator $\HSC$ as before if $s<\ell-1$, and to the null operator otherwise. We conclude that $P_\ell\cQ^{\l,\l}(K)=\cQ^{\m,\m}(K)P_\ell$ for any $K\in\cK$ if $\cQlm$ exists. The proof in the general case is analogous.
\end{proof}
\begin{rmk}
For $\ell=1$ and $(M,\mg)$ a conformally flat Lorentzian manifold, classical and quantum symmetries for the equations of motion of a free massless particle correspond to each other: $\{R,K\}\in (R)\Longleftrightarrow [P_1,\cQ^{\l,\l}(K)]\in (P_1)$. 
\end{rmk}
\begin{rmk}
The differential operators commuting with $P_\ell$ are the higher symmetries satisfying $\cQ^{\l,\l}(K)=\cQ^{\m,\m}(K)$, $K\in\cK$. In particular, for Killing $2$-tensors $K$, one has \cite{MRS13}  
\begin{eqnarray*}
\cQ^{\a,\a}(K)& =& K^{ij}\nabla_i\nabla_j+(\nabla_iK^{ij})\nabla_j-\frac{n^2\a(1-\a)}{(n+1)(n+2)}(\nabla_i\nabla_j K^{ij})\\
&&-\frac{n^2\a(\a-1)}{(n-2)(n+1)} \Ric_{ij}K^{ij}+\frac{2n^2\a(1-\a)}{(n-2)(n-1)(n+1)(n+2)}\rR\,\mg_{ij}K^{ij},
\end{eqnarray*}
where $\a\in\bbR$, $\nabla$ is the Levi-Civita connection, $\Ric$ the Ricci tensor and $\rR$ the scalar curvature. Since $\l+\m=1$, we get $\cQ^{\l,\l}(K) =\cQ^{\m,\m}(K)$, and these operators are symmetries of $P_\ell$.
\end{rmk}
\begin{rmk}
Explicit expressions can be obtained for the higher symmetries of $P_\ell$ via the formul\ae{} for the conformally equivariant quantization  given in (\ref{ExplicitQ}) for $\ell=1$ or in \cite{Sil09} for the general case. The obtained differential operators admit analogs in the curved case, which are not necessarily higher symmetries anymore. E.g.\ all the conformal Killing $2$-tensors do not give rise to higher symmetries of the conformal Laplacian in general \cite{MRS13}.
\end{rmk}

\section{Algebras of symmetries: geometric realizations and deformations}

The aim of this section is to provide a geometric interpretation for the algebras of classical symmetries, to deduce from them the algebras of higher symmetries of $P_\ell$ and to identify the star products induced by their composition as differential operators.
In all this section we work over a conformally flat manifold $(M,[\mg])$ of dimension $n\geq 3$ and signature $(p,q)$.

\subsection{Algebras of symmetries are generated by $\fkg$}\label{ParAlgSym}

Let us give a brief reminder on universal enveloping algebra $\mathfrak{U}(\fkh)$ and symmetric algebra $\rS(\fkh)$ of an arbitrary Lie algebra $\fkh$. See e.g.\ \cite{Dix74} for more details. From the tensor algebra of $\fkh$, $\mathfrak{U}(\fkh)$ and $\rS(\fkh)$ inherit respectively a filtration $\{\mathfrak{U}_k(\fkh)\}_k$ and a grading $\rS(\fkh)=\bigoplus_k\rS_k(\fkh)$ such that $\Gr\,\mathfrak{U}(\fkh)\simeq\rS(\fkh)$. Consequently, the canonical projections $\mathfrak{U}_k(\fkh)\rightarrow\mathfrak{U}_k(\fkh)/\mathfrak{U}_{k-1}(\fkh)$ define principal symbol maps, whose right inverses are called quantizations of $\rS(\fkh)$. Two such quantizations $Q_1,Q_2:\rS(\fkh)\rightarrow\mathfrak{U(\fkh)}$ are then linear bijections and satisfy $Q_1^{-1}\circ Q_2=\Id +N$ with $N:\rS(\fkh)\rightarrow\rS(\fkh)$ a map which strictly lowers the degree.  The symmetrization map $\PBW:\rS(\fkh)\rightarrow\mathfrak{U}(\fkh)$ given by
\begin{equation}\label{Sym}
\PBW:X_{i_1}\cdots X_{i_k}\mapsto \frac{1}{k!}\sum_{\tau\in\mathfrak{S}_k}X_{\tau(i_1)}\cdots X_{\tau(i_k)}
\end{equation}
is known to define a $\fkh$-equivariant quantization of $\rS(\fkh)$ for the canonical extensions of the adjoint action of $\fkh$ to $\rS(\fkh)$ and $\mathfrak{U}(\fkh)$ \cite{Dix74}. Any other $\fkh$-equivariant quantization is then of the form $\Phi=\PBW\circ\phi$, with $\phi=\Id+N$ and $N$ a $\fkh$-equivariant map on $\rS(\fkh)$ lowering the degree.

We return to the Lie algebra $\fkg=\ro(p+1,q+1)$, acting by conformal Killing vector fields on $(M,[\mg])$. Let $\m_0:T^*M\rightarrow\fkg^*$ be the moment map. Via the defining universal properties of the algebras $\rS(\fkg)$ and $\mathfrak{U}(\fkg)$, the pullback $\mu^*_0:\fkg\rightarrow\cS^0_1$ and the Lie derivative $L^\l:\fkg\rightarrow\D^{\l,\l}_1$ (see \eqref{Ll}) extend to algebra morphisms $\mu^*_0:\rS(\fkg)\rightarrow\cS^0$ and $L^\l:\mathfrak{U}(\fkg)\rightarrow\D^{\l,\l}$. Let $\cK$ be the space of generalized conformal Killing tensors, defined in \eqref{cKl}.
\begin{prop}\label{PropIJl}
Let $\l\in\bbR$. The spaces $\cK$ and $\cA:=\cQ^{\l,\l}(\cK)$ are algebras satisfying 
$$
\cK=\mu^*_0(\rS(\fkg))\simeq\rS(\fkg)/I \qquad\text{and}\qquad \cA=L^\l(\mathfrak{U}(\fkg))\simeq \mathfrak{U}(\fkg)/J^\l,
$$ 
where $I$ is a graded ideal of $\rS(\fkg)$ and $J^\l$ is a filtered ideal of $\mathfrak{U}(\fkg))$ such that $\Gr\, J^\l\simeq I$. 

Moreover, the conformally equivariant quantization of $\cK$ lifts to a $\fkg$-equivariant quantization $\Phi^\l$ of $\rS(\fkg)$, such that the following diagram commutes
\begin{equation}\label{PBW}
\xymatrix{
\rS(\fkg)\ar[d]_{\mu^*_0}\ar[rr]^{\Phi^\l} && \mathfrak{U}(\fkg)\ar[d]^{L^\l} \\
\cK \ar[rr]_{\cQ^{\l,\l}}&& \cA
}
\end{equation}
\end{prop}
\begin{proof}
 We start with proving $\cK=\mu^*_0(\rS(\fkg))$. 
Let $\ell\in\bbN^*$. By definition, the space of higher symmetries $\cAl$ is a subalgebra of $\D^{\l,\l}/(P_\ell)$. 
Therefore, the equality $\cK^\ell=\cK/(R^\ell)$ (see \eqref{cKl}) implies that $\cK$ is a subalgebra of $\cS^0$. From $L^\l(\fkg)\subset\cAl$ we deduce $\mu^*_0(\fkg)\subset\cK$ and then $\mu^*_0(\rS(\fkg))\subset\cK$. Since the $\fkg$-module $\mu^*_0(\rS(\fkg))\cap\cS^0_{k,s}$ is clearly non-empty and $\cK_{k,s}$ is an irreducible $\fkg$-module, we get the converse inclusion. 
 
Now, we prove that $\cA=L^\l(\mathfrak{U}(\fkg))$. By semi-simplicity of $\fkg$, the finite dimensional representations of $\fkg$ are completely reducible. In particular, $\cK\cap\cS^0_k$ can be viewed as a submodule of $\rS_k(\fkg)$ for all $k\in\bbN$. This leads to the decomposition $\rS(\fkg)\simeq\cK\oplus I$ of the symmetric algebra. In other words, $\mu^*_0$ admits a $\fkg$-equivariant section. Using the embedding of $L^\l(\mathfrak{U}(\fkg))$ into $\D^{\l,\l}$, we get then the following diagram of $\fkg$-modules  
$$
\xymatrix{
\rS(\fkg)\ar[rr]^{\PBW} && \mathfrak{U}(\fkg)\ar[d]^{L^\l} \\
\cK\ar[u] \ar[rrd]_{\cQ^{\l,\l}}&& L^\l(\mathfrak{U}(\fkg)) \ar[d]\\
 && \D^{\l,\l}
}
$$
Each arrow in the latter diagram is $\fkg$-equivariant and preserves the principal symbol. Hence, uniqueness of $\cQ^{\l,\l}$ on the $\fkg$-module $\cK$ implies that the diagram is commutative, proving $\cA=L^\l(\mathfrak{U}(\fkg))$.

Since $\mu^*_0$ respects the grading, its kernel $I$ is a graded ideal, and since $L^\l$ preserves the filtration, its kernel $J^\l$ is filtered. Using the commutativity of the following diagram,
$$
\xymatrix{
\mathfrak{U}_k(\fkg)\ar[d]\ar[rr]^{L^\l} && \cA\cap\D^{\l,\l}_k\ar[d] \\
\rS_k(\fkg) \ar[rr]^{\m_0^*} && \cK_k,
}
$$
where the vertical arrows denote principal symbol maps, we get that $\Gr\, J^\l=I$.

We have proved $\rS(\fkg)\simeq\cK\oplus I$, and along the same line we get $\mathfrak{U}(\fkg)\simeq \cA\oplus J^\l$. Using again the semi-simplicity of $\fkg$, the isomorphism $J^\l_k/J^\l_{k-1}\simeq I_k$ leads to $J_k^\l\simeq I_k\oplus J^\l_{k-1}$. Thus, there exists an isomorphism of $\fkg$-modules between $I$ and $J^\l$, inverse to the symbol map. Together with the previous decomposition of $\rS(\fkg)$ and $\mathfrak{U}(\fkg)$, this ensures the existence of the quantization $\Phi^\l$ and the commutativity of the diagram \eqref{PBW}. 
\end{proof}

The proof shows that $\cK_{k,s}=\mu^*_0(\rS(\fkg))\cap\cS^0_{k,s}$. Thus, on conformally flat manifolds, the $s$-generalized conformal Killing $k$-tensors are algebraically generated from the conformal Killing vectors. This fact can also be deduced from results in \cite{CDi01}.

We recall that $\cAl$ is the algebra of higher symmetries of $P_\ell$ (see Definition \ref{defiAl}) and $\cK^\ell$ is the space of $s$-generalized conformal Killing tensors with $s<\ell$ (see \eqref{cKl}). 
\begin{cor}\label{PropPBW}
Let $\ell\in\bbN^*$ and $\l=\frac{n-2\ell}{2n}$. We have the isomorphisms of algebras 
$$
\cK^\ell=\cK/(R^\ell)\simeq\rS(\fkg)/I^\ell\qquad\text{and}\qquad \cAl=\cA/(P_\ell)\simeq\mathfrak{U}(\fkg)/J^{\l,\ell},
$$
 where the ideals are $I^\ell=I+(\mu^*_0)^{-1}(R^\ell)$ and $J^{\l,\ell}=J^\l+(L^\l)^{-1}(P_\ell)$.
\end{cor}
\begin{proof}
By definition, we have $\cK^\ell=\cK/(R^\ell)$. The equality $\cAl=\cA/(P_\ell)$ is a consequence of $\cQ^{\l,\l}(\cK^\ell)=\cAl$ and $\cQ^{\l,\l}((R^\ell))=(P_\ell)$ (see Theorem \ref{THM} and Lemma \ref{lem-CEQ-quotient} respectively). The remaining results follow from Proposition \ref{PropIJl}, $R^\ell\in\mu^*_0(\rS(\fkg))$ and again $\cQ^{\l,\l}((R^\ell))=(P_\ell)$. 
\end{proof}

\subsection{A family of coadjoint orbits of $\rO(p+1,q+1)$}

We restrict in this section to the case where $M$ is the homogeneous space $\bbS^p\times\bbS^q$ of the conformal group $G=\rO(p+1,q+1)$. This group admits a linear Hamiltonian action on $T^*\bbR^{p+1,q+1}$, hence it embeds into the symplectic linear group $\Sp(2n+4,\bbR)$, with $n=p+q$. The centralizer of $G$ in $\Sp(2n+2,\bbR)$ is isomorphic to $\SL(2,\bbR)$, and they form together a Howe dual pair, see \cite{How89}. Their moment maps are given explicitly by 
\begin{equation}\label{Def:moment}
\begin{array}{rclcrcl}
\m:T^*\bbR^{p+1,q+1}& \rightarrow&\fkg^* &\quad \mbox{and}\quad\qquad &J:T^*\bbR^{p+1,q+1}&\rightarrow&\Sl(2,\bbR)^*\\
(u,v)&\mapsto& u\wedge v& &(u,v)&\mapsto& (u\cdot v,u^2,v^2)
       \end{array} 
\end{equation}
where $u,v\in\bbR^{p+1,q+1}$ and we use the $G$-module isomorphisms $\fkg^*\simeq\fkg\simeq\Lambda^2 \bbR^{p+1,q+1}$. 

Our aim is to describe the coadjoint orbits  in the image of~$\mu$  as symplectic reductions at $0$ with respect to Lie subgroups of $\SL(2,\bbR)$. This is closely related to known results on \textit{symplectic dual pairs} \cite{BWu09}, see also \cite{ORa04}. 

The Lie subgroups of $\SL(2,\bbR)$ are generated by the flow of Hamiltonian functions in $J^*\big(\rS(\Sl(2,\bbR))\big)$, i.e$.$ polynomial functions in $x^2=\eta_{AB}x^Ax^B$, $xp=x^Ap_A$ and $p^2=\eta^{AB}p_Ap_B$, where $(x^A,p_A)$ are Cartesian coordinates on $T^*\bbR^{p+1,q+1}$. Important such functions are given by the Casimir elements of $\fkg$ and $\Sl(2,\bbR)$ in $\Cinfty(T^*\bbR^{p+1,q+1})$. They are equal to 
$$
C=(xp)^2-x^2p^2
$$
 and $C/4$ respectively, if we define the Killing form by the map $(X,Y)\mapsto\half\Tr(\rho(X)\rho(Y))$ with $\rho$ the standard representation.

We denote by $\left\langle f_1,\ldots,f_k\right\rangle$ the Lie group generated by the flow of Hamiltonian functions $f_1,\ldots,f_k\in\Cinfty(T^*\bbR^{p+1,q+1})$ and by $T^*\bbR^{p+1,q+1}//\left\langle f_1,\ldots,f_k\right\rangle$ the corresponding symplectic quotient at $0$. If the linear span of those functions is closed under the Poisson bracket, the above symplectic quotient is  then the quotient of the common zero locus of $f_1,\ldots,f_k$ by their Hamiltonian flows. By the Marsden-Weinstein theorem, this quotient space is a symplectic manifold if $0$ is a regular value of the involved Hamiltonian functions. E.g$.$, we have 
\begin{equation}\label{TRTM}
T^*\left(\bbR^{p+1,q+1}\setminus \{0\}\right)//\left\langle xp,x^2\right\rangle\simeq T^*(\bbS^p\times\bbS^q).
\end{equation}
Note that $T^*(\bbS^p\times\bbS^q)$ splits into three stable submanifolds under the Hamiltonian $G$-action, according to the sign of the norm of covectors, with straightforward notation: $T^*(\bbS^p\times\bbS^q)=T^*_+(\bbS^p\times\bbS^q)\sqcup T^*_0(\bbS^p\times\bbS^q)\sqcup T^*_{-}(\bbS^p\times\bbS^q)$. 

\begin{thm}\label{thmCoad}
Let $p,q\geq 1$, $n\geq 3$ and $P(\a,\b)$ be the space of planes in $\bbR^{p+1,q+1}$ of signature $(\a,\b)$. The coadjoint orbits of $G$ in the image of $\mu$ are classified as follows:
\begin{enumerate}
\item the one parameter family of semi-simple orbits $\mathcal{O}_{a^+}$ and $\mathcal{O}_{a^-}$ for $a\in\bbR^*_+$ such that
$$
\xymatrix{
T^*\bbR^{p+1,q+1}//\left\langle xp,C-a\right\rangle
\ar[rr]^{\quad\qquad\bbZ_4}&&\mathcal{O}_{a^+}\sqcup\mathcal{O}_{a^-}\ar[rr]^{\simeq\quad}&& P(2,0)\sqcup P(0,2),
}
$$
\item the one parameter family of semi-simple orbits $\mathcal{O}_{a}$ for $a\in\bbR^*_-$ such that
$$
\xymatrix{
T^*\bbR^{p+1,q+1}//\left\langle xp,C-a\right\rangle
\ar[rr]^{\qquad\qquad\bbZ_4}&&\mathcal{O}_{a}\ar[rr]^{\simeq\quad}&& P(1,1),
}
$$
\item the two nilpotent orbits $\mathcal{O}_{0^+}$ and $\mathcal{O}_{0^-}$ such that
$$
\xymatrix{
T^*_+(\bbS^p\times\bbS^q)\sqcup T^*_-(\bbS^p\times\bbS^q)
\ar[rr]^{\quad\qquad\bbZ_2}&&\mathcal{O}_{0^+}\sqcup\mathcal{O}_{0^-}\ar[rr]^{\bbR^*\quad}&& P(1,0)\sqcup P(0,1),
}
$$
\item The minimal nilpotent orbit $\mathcal{O}_{00}$ such that
$$
\xymatrix{
(T^*(\bbS^p\times\bbS^q)\setminus \bbS^p\times\bbS^q)//\left\langle R\right\rangle\ar[rr]^{\quad\qquad\qquad\bbZ_2}&&\mathcal{O}_{00}\ar[rr]^{\bbR^*}&&P(0,0),
}
$$
\item The null orbit $\{0\}$.
\end{enumerate}
All the arrows denote $G$-equivariant coverings, whose fibers are indicated as superscript. The first ones are symplectomorphisms.
\end{thm}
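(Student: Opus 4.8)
The approach rests on the dual pair $(G,\SL(2,\bbR))$ together with Witt's extension theorem. First I would make the image of $\mu$ explicit: identifying $\fkg^*\cong\Lambda^2\bbR^{p+1,q+1}$ by means of $\eta$ and of the Killing form, the moment map reads $\mu(x,p)=x\wedge p$, so $\im\mu$ is the cone of decomposable (rank $\le 2$) bivectors. Classifying the coadjoint orbits of $G$ in $\im\mu$ thus amounts to classifying the $G$-orbits of such bivectors, i.e.\ of the pointed planes $\mathrm{span}(x,p)$ they support together with the area datum $x\wedge p$. By Witt's theorem the $G$-orbit of a nondegenerate $2$-plane is determined by its signature, while the continuous modulus is carried by the shared Casimir $C=(xp)^2-x^2p^2$: indeed $-C$ is the Gram determinant $\det\!\begin{pmatrix} x^2 & xp\\ xp & p^2\end{pmatrix}$, whose sign distinguishes the definite planes of family (1) from the split planes of family (2), and whose vanishing singles out the degenerate planes of the nilpotent families (3)--(4). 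The residual discrete and one-dimensional ambiguities in passing from a pointed plane to its underlying plane produce the $\bbZ_2$- and $\bbR^*$-covers recorded in the statement.

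Next I would treat the two semisimple families. The key fact from the dual pair is that $\SL(2,\bbR)$ acts by $(x,p)\mapsto(ax+bp,cx+dp)$, hence fixes $\mu$ since $x'\wedge p'=(ad-bc)\,x\wedge p=x\wedge p$; consequently the fibers of $\mu$ over $\im\mu\setminus\{0\}$ are exactly $\SL(2,\bbR)$-orbits, and reducing $T^*\bbR^{p+1,q+1}$ by $\SL(2,\bbR)$ at a coadjoint orbit of given Casimir returns a single $G$-coadjoint orbit with matching Casimir value. Concretely I realize this reduction through the abelian group generated by the Poisson-commuting Hamiltonians $xp$ and $C$ (the latter quadratic in the components of $J$ and $G$-invariant, so still $G$-equivariant): the level set $\{xp=0\}\cap\{C=a\}$ consists of the orthogonal pairs $(x,p)$ of fixed Gram determinant $-a$, and quotienting by the two flows collapses the choice of such a pair inside its plane. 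For $a\ne 0$ this is a regular value, so Marsden--Weinstein yields a smooth symplectic manifold which I identify with $\cO_{a^\pm}$ (resp.\ $\cO_a$); forgetting the orientation of $x\wedge p$ gives the $\bbZ_2$-cover onto $P(2,0)\sqcup P(0,2)$ (resp.\ $P(1,1)$), and a dimension count together with $G$-equivariance upgrades the last arrow to a symplectomorphism.

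For the nilpotent families I would pass to $T^*M$ through the identification \eqref{TRTM}, and use the $G$-stable splitting $T^*M=T^*_+M\sqcup T^*_0M\sqcup T^*_-M$ by the sign of the covector norm $R$. On $T^*_\pm M$ the pair $(x,p)$ spans a degenerate plane whose nonnull line has definite signature, producing the nilpotent orbits $\cO_{0^\pm}$; the one Euler dilation not divided out in \eqref{TRTM} realizes the $\bbR^*$-cover onto $P(1,0)\sqcup P(0,1)$. Finally, reducing $T^*M\setminus M$ by the null-geodesic flow $\langle R\rangle$ on the null locus gives the minimal nilpotent orbit $\cO_{00}$, and the residual scaling covers the conformal sphere $P(0,0)$ of null directions; in each case the $\bbZ_2$ records the sign $\omega\mapsto-\omega$.

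The main obstacle is precisely the nilpotent stratum $C=0$, where $0$ fails to be a regular value and Marsden--Weinstein does not apply directly. There I cannot argue by a generic reduction and must instead verify by hand, in explicit coordinates on $T^*M$, that the candidate quotients are smooth, that the null-geodesic flow acts freely on $T^*_0M\setminus M$ modulo scaling, and that the resulting manifolds are genuinely the (minimal) nilpotent coadjoint orbits with the stated covering degrees. Checking that $\cO_{00}$ is indeed the minimal orbit and that all the maps are $G$-equivariant coverings with the asserted fibers, while matching the orientation data throughout, is the delicate part of the argument.
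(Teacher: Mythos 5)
Your proposal is correct and follows essentially the same route as the paper: identify $\fkg^*$ with bivectors so that $\mu(x,p)=x\wedge p$, project $G$-equivariantly onto the Grassmannian of $2$-planes, invoke Witt's theorem to list the orbits $P(\alpha,\beta)$, and read off the continuous modulus from the Casimir and the covers from the scaling and flow data. The ``delicate'' nilpotent verification you defer is exactly what the paper does by hand --- it writes the one-parameter flows of $xp$, $C$ and $R$ explicitly, observes that they act only in the fibers of $\mu$ (so $\mu$ descends to the quotients, with $\bbZ_2$ fibers), deduces that a unique coadjoint orbit lies over each $P(\alpha,\beta)$ from transitivity of $G$ in the fibers, and cites Wolf for the minimality of $\mathcal{O}_{00}$; note also that no singular Marsden--Weinstein reduction is ever required, since family (3) involves no further reduction and $0$ is a regular value of $R$ on $T^*M\setminus M$ for family (4).
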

\begin{proof}
Through the $G$-module isomorphisms $\Lambda^2 \bbR^{p+1,q+1}\simeq \fkg\simeq\fkg^*$, coadjoint orbits are identified to $G$-orbits in the space of bivectors, endowed with the natural $G$-action. The moment map $\mu$ defined by \eqref{Def:moment} takes its values in the space of simple bivectors $\Bv=\{u\wedge v\vert\, u,v\in\bbR^{p+1,q+1}\}$. Our key tool is the $G$-equivariant projection of $\Bv$ on the Grassmannian $\mathrm{Gr}(2,n+2)$ of planes in $\bbR^{p+1,q+1}$. This is encompassed in the following sequence of $G$-spaces:
\begin{equation}\label{BvPlan}
\xymatrix{
T^*\bbR^{p+1,q+1}\ar[rr]^{\quad\SL(2,\bbR)}&&\Bv\ar[rr]^{\bbR^*\quad\qquad}&&\mathrm{Gr}(2,n+2)\cup\{0\},
}
\end{equation}
where the superscripts denote the fibers of the coverings over $\Bv\setminus\{0\}$ and $\mathrm{Gr}(2,n+2)$. The moment map $\m$ preserves the Poisson structure, hence a $G$-stable subset of $T^*\bbR^{p+1,q+1}$ projects onto coadjoint orbits of $G$, which themselves project onto $G$-orbits of $\mathrm{Gr}(2,n+2)\cup\{0\}$. Thanks to the Witt Theorem, the latter are known to be $\{0\}$ and the $6$ spaces $P(\a,\b)$ of planes of given signature $(\a,\b)$ for the induced metric. 
From \eqref{TRTM}, we easily get that $T^*_{\pm}(\bbR^{p+1,q+1}_*)//\left\langle xp,x^2\right\rangle\simeq T^*_{\pm}(\bbS^p\times\bbS^q)$ and also that  $$\left(T^*(\bbR^{p+1,q+1}_*)\setminus(\bbR^{p+1,q+1}_*)\right)//\left\langle xp,x^2,p^2\right\rangle\;\simeq\; (T^*(\bbS^p\times\bbS^q)\setminus (\bbS^p\times\bbS^q))//\left\langle R\right\rangle,$$
where $\bbR^{p+1,q+1}_*:=\bbR^{p+1,q+1}\setminus\{0\}$.
Thus, in the four non-trivial cases, we deal with symplectic reductions of $G$-stable subset of $T^*\bbR^{p+1,q+1}$. We easily check that the common zero locus of the Hamiltonian functions defining the reduction have the announced images in $\mathrm{Gr}(2,n+2)$. 
Moreover, the one parameter groups generated by the Hamiltonian flows of $xp$, $x^2$, $p^2$ and $C$ are respectively given by $(u,v)\mapsto(e^t u,e^{-t}v)$, $(u,v)\mapsto(u,v+tu)$, $(u,v)\mapsto(u+tv,v)$ and $(u,v)\mapsto(u+(tu^2)v,v-(tv^2)u)$, for $t\in\bbR$. They act only in the fibers of $\mu$, hence the map $\mu$ descends to the symplectic quotients. A direct computation proves that the fibers of $\m$ on the reduced spaces are of cardinal $4$ or $2$. They admit a transitive action of the discrete groups $\bbZ_4$ and $\bbZ_2$ respectively, the action of their generators being $(u,v)\mapsto (-v,u)$ and $(u,v)\mapsto (-u,-v)$.  We end with the four sequences $(1)$, $(2)$, $(3)$, and $(4)$. There, a unique coadjoint orbit lies over each orbit in $\mathrm{Gr}(2,n+2)$, since the action of the group $G$ is transitive in the fibers of each arrow. 
For a proof of the minimality of $\mathcal{O}_{00}$ we refer to \cite{Wol78}. 
\end{proof}
The points $(3)$ and $(4)$ in the latter theorem combine, according to Cordani \cite{Cor86}, to provide a conformal regularization by $T^*M$ of the cone $\mathcal{O}_{0^+}\cup\mathcal{O}_{00}\cup\mathcal{O}_{0^-}$, with singularity in~$\mathcal{O}_{00}$.
\begin{rmk}
The used symplectic reductions of $T^*\bbR^{p+1,q+1}$ correspond to symplectic reduction with respect to the moment map $J$ of $\SL(2,\bbR)$ at, respectively, the points $(0,\pm\sqrt{a},\mp\sqrt{a})$ for a>0, $(0,\sqrt{|a|},\sqrt{|a|})$ for a<0, $(0,0,\pm 1)$ and~$(0,0,0)$. Hence, we obtain a bijection between the coadjoints orbits of $\SL(2,\bbR)$ and the ones in the image of $\mu$. Similar results are obtained in \cite{BWu09} for general dual pairs, under the \textnormal{symplectic Howe condition}. 
\end{rmk}
Now, we determine the algebra of regular functions on each coadjoint orbit of $G$ in the image of $\mu$. 
We have $\fkg\simeq\Lambda^2\bbR^{p+1,q+1}$, that we represent by the Young diagram ${\Yboxdim{5pt} \yng(1,1)}$. Accordingly, elementary representation theory of the orthogonal Lie algebra leads to
\begin{equation}
\fkg\odot\fkg={\mbox{\tiny $\yng(2,2)\oplus\yng(1,1,1,1)$}} \quad \text{and} \quad {\mbox{\tiny $\yng(2,2)={\yng(2,2)}_{\,0}\oplus{\yng(2)}_{\,0}$}}\oplus\bbR.
\end{equation}  
In the second decomposition, the index $0$ denotes the trace-free part, and the three components correspond to $\cK_{2,0}$, $\cK_{2,1}$ and the one-dimensional space generated by the Casimir element in~$\rS_2(\fkg)$, still denoted by $C$. The extra term in the decomposition of $\fkg\odot\fkg$ is generated by exterior products in $\Lambda\bbR^{p+1,q+1}$ of elements of $\fkg\simeq\Lambda^2\bbR^{p+1,q+1}$.
\begin{lem}
The kernel of the pullback $\mu^*:\rS(\fkg)\rightarrow\Cinfty(T^*\bbR^{p+1,q+1})$ by the moment map of $\fkg$ is the ideal generated by ${ \Yboxdim{5pt} \yng(1,1,1,1)}$. 
\end{lem}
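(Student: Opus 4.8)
The plan is to prove that $\ker\mu^*$ is exactly the ideal $\langle\yng(1,1,1,1)\rangle$ by a two-sided inclusion, exploiting that $\mu^*$ is a graded algebra morphism whose source $\rS(\fkg)$ is semisimple as a $\fkg$-module. First I would establish the easy inclusion: the image $\mu^*(\rS(\fkg))=\cK$ is, by Theorem \ref{PropIJl}, the algebra of classical conformal Killing tensors, which live in the harmonic components $\cS^0_{k,s}$ of the symbol space. Concretely $\mu^*$ sends an element $X\wedge Y\in\fkg\simeq\Lambda^2\bbR^{p+1,q+1}$ to the simple bivector realized as a function on $T^*\bbR^{p+1,q+1}$, and on a quadratic element $\mu^*(X\cdot Y)$ is a product of two such simple bivectors. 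The component $\yng(1,1,1,1)$ of $\fkg\odot\fkg$ is, as noted in the text, generated by the exterior products $\Lambda^4\bbR^{p+1,q+1}$ of the bivectors; since $\mu$ is valued in \emph{simple} bivectors $u\wedge v$, any totally antisymmetric four-vector combination $u\wedge v\wedge u'\wedge v'$ vanishes identically once the two bivectors share the same pair $(u,v)$, and more to the point the Pl\"ucker relations force the $\Lambda^4$-component of $\mu^*(\fkg\odot\fkg)$ to vanish. Thus $\mu^*$ kills $\yng(1,1,1,1)$ in degree $2$, and since $\ker\mu^*$ is an ideal, it contains the whole ideal generated by this component.

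For the reverse inclusion I would argue by a dimension/irreducibility count degree by degree, using the semisimplicity of $\fkg=\ro(p+1,q+1)$. The key structural input is the proof of Theorem \ref{PropIJl}, which already shows $\cK=\mu^*(\rS(\fkg))\simeq\rS(\fkg)/I$ with $I=\ker\mu^*$ a \emph{graded} ideal, and identifies $\cK$ with $\bigoplus_{k,s}\cK_{k,s}$. So the quotient $\rS(\fkg)/\ker\mu^*$ is completely understood: its graded pieces are precisely the spaces $\cK_{k,s}$ of $s$-generalized conformal Killing $k$-tensors. The plan is therefore to compare this with the quotient $\rS(\fkg)/\langle\yng(1,1,1,1)\rangle$. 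The latter quotient is a classical object: imposing the vanishing of the $\Lambda^4$-part in every degree leaves exactly the tensors whose Young-symmetry type has at most two columns, i.e.\ the irreducible $\fkg$-modules indexed by two-row diagrams (together with their traces, governed by the Casimir $C$). I would match these two descriptions component by component, checking that in each bidegree $(k,s)$ the two quotients have the same irreducible constituent, so that the surjection $\rS(\fkg)/\langle\yng(1,1,1,1)\rangle\twoheadrightarrow\rS(\fkg)/\ker\mu^*$ induced by the inclusion $\langle\yng(1,1,1,1)\rangle\subseteq\ker\mu^*$ is in fact an isomorphism.

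The main obstacle, and the step deserving the most care, is controlling the \emph{traces}: the decomposition of $\fkg\odot\fkg$ contains the one-dimensional Casimir line $\bbR\cdot C$, and $\mu^*(C)=C=(xp)^2-x^2p^2$ is \emph{not} zero as a function on $T^*\bbR^{p+1,q+1}$. Hence the Casimir survives in $\cK$ and must not be swept into the kernel; I must verify that the ideal $\langle\yng(1,1,1,1)\rangle$ genuinely excludes the Casimir direction and, more generally, that the symmetric-algebra ideal generated in degree $2$ by the pure $\Lambda^4$-component does not accidentally absorb any trace terms in higher degrees. This is a computation in the plethysm of $\ro(p+1,q+1)$-representations appearing in $\rS^k(\Lambda^2\bbR^{p+1,q+1})$, and the delicate point is that the ideal generated by $\yng(1,1,1,1)$ could \emph{a priori} be larger than the sum of the ``bad'' components in each degree; one must check that, degree by degree, the components it produces are exactly the complement of $\bigoplus_{k,s}\cK_{k,s}$. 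The surjectivity established in the first paragraph together with the already-known exact description $\Gr\cK\simeq\rS(\fkg)/\ker\mu^*$ lets me close the argument by an equality of dimensions in each irreducible isotypic block, so the comparison reduces to the standard branching fact that the two-column (at most two-row) diagrams are precisely the ones not annihilated by $\mu^*$.
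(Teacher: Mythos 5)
Your first inclusion is fine and is essentially the paper's own argument: since $\mu(u,v)=u\wedge v$ is a simple bivector, the $\Lambda^4\bbR^{p+1,q+1}$-component of $\fkg\odot\fkg$ pulls back to zero (Pl\"ucker), and $\ker\mu^*$ is an ideal because $\mu^*$ is an algebra morphism; the paper does all degrees at once by noting that a tensor skew-symmetric in any three indices dies when contracted only with $x$'s and $p$'s.

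The reverse inclusion, however, rests on a misidentification, and this is a genuine gap. The $\mu^*$ of this Lemma is the pull-back by the moment map of the \emph{linear} $G$-action on the ambient space $T^*\bbR^{p+1,q+1}$, whereas the $\mu^*$ of Theorem \ref{PropIJl} is the moment map of the conformal action on $T^*M$; these are different maps with different kernels, and your plan conflates them. For the $T^*M$ map the Casimir is killed (after the reduction \eqref{TRTM} one has $x^2=xp=0$, so $(xp)^2-x^2p^2$ restricts to $0$), which is why $\cK\simeq\rS(\fkg)/\cI_0$ with $\cI_0=\bigl([C]\bbR\oplus\Lambda^4\bbR^{p+1,q+1}\bigr)$ in Proposition \ref{PolyO}; for the ambient map of the Lemma the Casimir does \emph{not} die, as you yourself observe. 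Consequently $\mu^*(\rS(\fkg))\not\simeq\cK$ here: already in degree $2$ the ambient image has three irreducible summands (the images of $\cK_{2,0}$, $\cK_{2,1}$ and the Casimir line $\bbR C$), while $\cK\cap\cS^0_2=\cK_{2,0}\oplus\cK_{2,1}$ has only two. Your component-by-component matching of $\rS(\fkg)/\bigl(\Lambda^4\bbR^{p+1,q+1}\bigr)$ against $\bigoplus_{k,s}\cK_{k,s}$ therefore cannot close: if such an isomorphism existed, then since each $\cK_{k,s}$ is irreducible and non-trivial, the quotient would contain no non-zero $\fkg$-invariant vector, forcing $C\in\ker\mu^*$ --- contradicting the non-vanishing of $\mu^*(C)$ that you correctly noted. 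So the ``obstacle'' in your last paragraph is not a delicate plethysm to be verified; it is an inconsistency of the framework (your phrase ``the Casimir survives in $\cK$'' is exactly the false statement). The paper's reverse inclusion never mentions $\cK$: in degree $k$ the quotient $\rS_k(\fkg)/\bigl(\Lambda^4\bbR^{p+1,q+1}\bigr)_k$ is the $\GL$-module of the two-row diagram with $k$ columns; its $\ro(p+1,q+1)$-irreducible constituents are traceless two-row modules multiplied by traces, and each one is mapped to something non-zero by $\mu^*$ precisely because the traces $x^2$, $xp$, $p^2$ are non-vanishing, unconstrained functions on the \emph{ambient} $T^*\bbR^{p+1,q+1}$; irreducibility then upgrades non-vanishing to injectivity on each constituent, giving $\ker\mu^*=\bigl(\Lambda^4\bbR^{p+1,q+1}\bigr)$ exactly.
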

\begin{proof}
Since elements of $\fkg$ are skew-symmetric $2$-tensors $V^{AB}$ on $\bbR^{p+1,q+1}$, the map $\mu^*$ is explicitly given by $V^{AB\cdots CD}\mapsto x_A\cdots x_CV^{AB\cdots CD}p_B\cdots p_D$, and vanishes then on tensors $V^{AB\cdots CD}$ which are skew-symmetric in any $3$ indices. Hence, the module ${ \Yboxdim{5pt} \yng(1,1,1,1)}$ is in the kernel of~$\mu^*$. In consequence, $\mu^*(\rS_k(\fkg))$ is contained in the module $\rS_k(\fkg)/\left(\,{ \Yboxdim{5pt} \yng(1,1,1,1)}\,\right)$, described by the Young diagram with $2$ lines and $k$ columns. But none of the irreducible components of such a Young diagram is in the kernel of $\mu^*$, as all the traces, $x^2, xp, p^2$, can occur in $\Cinfty(T^*\bbR^{p+1,q+1})$. In conclusion, the algebra  $\mu^*(\rS(\fkg))$ is isomorphic to $\rS(\fkg)/\left(\,{ \Yboxdim{5pt} \yng(1,1,1,1)}\,\right)$.
\end{proof}

\begin{prop}\label{PolyO}
Let $a\in\bbR$, $C$ the Casimir element of $\fkg$ and $\cI_a=\left([C-a]\bbR\oplus{\Yboxdim{5pt}\yng(1,1,1,1)}\,\right)$ an ideal of $\rS(\fkg)$. The algebras of regular functions  on $\mathcal{O}_{a^{(\pm)}}$ are given by $\rS(\fkg)/\cI_a$.
\end{prop} 
\begin{proof}
Following the proof of Theorem \ref{thmCoad}, we get that the moment map $\m$ descends to $T^*(\bbR^{p+1,q+1}\setminus\{0\})//\left\langle xp,C\right\rangle$ and provides thus a $\bbZ_4$-covering of the two orbits $\mathcal{O}_{0^\pm}$. Hence, for all  $a \in\bbR$, the coadjoint orbit $\mathcal{O}_{a^{(\pm)}}$ admits a $\bbZ_4$-covering by a symplectic reduction of $T^*\bbR^{p+1,q+1}$. 
Moreover, the generator of $\bbZ_4$ acts by $(u,v)\mapsto(-v,u)$, so that it leaves invariant the functions in $\m^*\fkg$, which are linear combinations of $x_Ap_B-x_Bp_A$, for $A,B=0,\ldots,p+q+1$. Therefore, on each coadjoint orbit $\mathcal{O}_{a^{(\pm)}}$, the algebra of regular functions is isomorphic to the reduction of $\mu^*(\rS(\fkg))$ by $\left\langle xp,C-a\right\rangle$. The reduction with respect to $xp$ modifies only the fibers of $\mu$ and the Casimir element $C$ Poisson commutes with all elements in $\mu^*(\rS(\fkg))$, so that reduction with respect to $\left\langle xp,C-a\right\rangle$ amounts to modding out by $(C-a)$. 
\end{proof}

\subsection{The algebras of classical and quantum symmetries}

We return now to a general conformally flat manifold $(M,[\mg])$, and use notation of Section \ref{ParAlgSym}. In particular, $\cK$ denotes the algebra of generalized conformal Killing tensors, generated by $\m_0^*(\fkg)$ in $\Cinfty(T^*M)$, and $\cK^1=\cK/(R)$ is the algebra of traceless conformal Killing tensors over $M$. 
\begin{thm}\label{ThmK}
The algebra $\cK$ is isomorphic to the algebras of regular functions on the orbits $\mathcal{O}_{0^{\pm}}$, given by $\rS(\fkg)/\cI_0$ with $\cI_0=\left(C\cdot\bbR\oplus{\Yboxdim{5pt}\yng(1,1,1,1)}\,\right)$ and $C$ the Casimir element of $\fkg$.
Moreover, the algebra $\cK^1$ is isomorphic to the algebra of regular functions on $\mathcal{O}_{00}$, given by $\rS(\fkg)/\cI_{00}$ with $\cI_{00}=\left({\Yboxdim{5pt}{\yng(2)}_{\,0}}\right)+\cI_0$. 
\end{thm}
\begin{proof}
The algebras $\cK$ and $\cK^1$ are of local nature and thus we can assume that $M=\bbS^p\times\bbS^q$. According to Proposition \ref{PropIJl}, the algebra $\cK$ is generated by $\m_0^*\fkg$, where $\m_0:T^*_{\pm}M\rightarrow\fkg^*$ is the moment map of $\fkg$. By Theorem \ref{thmCoad}, $\m_0$ is a $\bbZ_2$-covering of the coadjoint orbits $\mathcal{O}_{0^\pm}$ and the action of $\bbZ_2$ leaves invariant the functions in $\m_0^*\fkg$. Hence, $\cK$ identifies with the algebra of regular functions on the orbits $\mathcal{O}_{0^\pm}$ and the isomorphism $\cK\simeq\rS(\fkg)/\cI_0$ follows then from Proposition \ref{PolyO}. Similarly, the coadjoint orbit $\mathcal{O}_{00}$ admits a $\bbZ_2$-covering by the symplectic quotient $(T^*M\setminus M)//\left\langle R \right\rangle$. Thus, the algebra of regular functions on $\mathcal{O}_{00}$ arises as a reduction of $\cK$. Since $\{R,\cK\}\subset(R)$, this reduced algebra is $\cK/(R)\simeq\cK^1$. As $R\in\cK_{2,1}$ is the pullback of an element in ${\Yboxdim{5pt}{\yng(2)}_{\,0}}$, we finally obtain $\cK^1\simeq\rS(\fkg)/\cI_{00}$ and $\cI_{00}=\left({\Yboxdim{5pt}{\yng(2)}_{\,0}}\right)+\cI_0$. . 
\end{proof}
We can now recover the description of the algebras of higher symmetries $\cAl$ of the $\ell^{\text{th}}$ conformal powers of the Laplacian $P_\ell$, obtained originally in \cite{GSi09}. In addition, we determine the algebra $\cA=\cQ^{\l,\l}(\cK)$, generated by the space  of Lie derivatives $L^\l(\fkg)$ in $\D^{\l,\l}$. Recall that we define the Killing form by $\half\Tr(XY)$, for every $X,Y\in\fkg$. The corresponding Casimir operator in $\mathfrak{U}(\fkg)$ is given by $\cC=\PBW(C)$, the symmetrization of $C\in\rS(\fkg)$ (see \eqref{Sym}). 
\begin{thm}
For every $\l\in\bbR$, the algebras $\cA=\cQ^{\l,\l}(\cK)$ are isomorphic to $\mathfrak{U}(\fkg)/J^\l$ with $J^\l=\big(\PBW\left({ \Yboxdim{5pt} \yng(1,1,1,1)}\right)\oplus[\cC-\rho(\l)]\bbR\big)$, where $\rho(\l)=n^2\l(1-\l)$ is the eigenvalue of the Casimir operator $\cC$ on $\l$-densities. 

For $\l=\frac{n-2\ell}{n}$, the algebra of higher symmetries $\cAl$ is isomorphic to $\mathfrak{U}(\fkg)/J^{\l,\ell}$, where $J^{\l,\ell}$ is generated by $J^\l$ and the Young diagram \hspace{-0.4cm}
{\renewcommand{\arraystretch}{0.4} 
{\setlength{\tabcolsep}{0.07cm} 
\begin{tabular}{|c|c|c|}
 \hline
 & ... &\\
\hline
\end{tabular}
}}\hspace{-0.1cm}$_0$ 
of length $2\ell$. In particular, $J^{\l,1}$ is the Joseph ideal.
\end{thm} 
\begin{proof}
According to Proposition \ref{PropIJl} and Theorem \ref{ThmK}, we have $\cA\simeq\mathfrak{U}(\fkg)/J^\l$ and the graded ideal associated to $J^\l$ is $\cI_0$. We deduce that $J^\l$ is also generated by quadratic elements and we get then $J^\l_2=\Phi^\l (I_2)$, with $I_2=\mathcal{I}_0\cap\rS_2(\fkg)$ and $\Phi^\l=\PBW\circ\phi^\l$ (see Proposition  \ref{PropIJl}). The map $\phi^\l$ being $\fkg$-equivariant, the space $\phi^\l(I_2)$ is a $\fkg$-submodule of $\mathfrak{U}_2(\fkg)\simeq\bbR\oplus\fkg\oplus\rS_2(\fkg)$. Hence, $J_2^\l$ is generated by $\PBW\left({ \Yboxdim{5pt} \yng(1,1,1,1)}\right)$ and the Casimir operator $\cC$ of $\mathfrak{U}(\fkg)$, modified by some real number $\rho(\l)$. Since $\cC-\rho(\l)$ projects onto $0$ via $L^\l:\mathfrak{U}(\fkg)\rightarrow\D^{\l,\l}$, the real number $\rho(\l)$ is necessarily the eigenvalue of $L^\l(\cC)$ on $\l$-densities. The latter has been computed in \cite{DLO99} for the opposite Killing form.

From Corollary \ref{PropPBW}, we deduce that $\cAl$ is isomorphic to $\mathfrak{U}(\fkg)/J^{\l,\ell}$, where $J^{\l,\ell}$ is generated by $J^\l$ and the Young diagram \hspace{-0.4cm}
{\renewcommand{\arraystretch}{0.4} 
{\setlength{\tabcolsep}{0.07cm} 
\begin{tabular}{|c|c|c|}
 \hline
 & ... &\\
\hline
\end{tabular}
}}\hspace{-0.1cm}$_0$ 
of length $2\ell$. Thanks to Theorem~\ref{THM}, we have the isomorphism of algebras $\Gr\,(\mathcal{A}^{\l,1})\simeq\cK^1$. As $\cK^1\simeq\rS(\fkg)/\cI_{00}$ and the ideal $\cI_{00}$ is prime, we deduce that $J^{\l,1}$ is completely prime. Besides, their common characteristic variety is the closure of the minimal nilpotent coadjoint orbit of~$G$. These two properties characterize the Joseph ideal \cite{Jos76}. 
\end {proof}
The identification of the Joseph ideal in the context of the higher symmetries of the Laplacian was already obtained in different manners \cite{ESS07, Som08}, but not from its original definition like here. The determination of the ideals $J^{\l,\ell}$ has been already performed in the context of higher symmetries of $P_\ell$ in \cite{Eas05,ELe08,GSi09}, but in different terms. Let us make clear the link between the two approaches.  We denote by $\left\langle \cdot,\cdot\right\rangle$ the chosen Killing form and $C$ the associated Casimir element in $\rS(\fkg)$.  In the previous works, the projections of $X\odot Y\in\fkg\odot\fkg$ on each irreducible component are used. Following $\fkg\odot\fkg={ \Yboxdim{5pt}\yng(2,2)}_{\,0}\oplus{ \Yboxdim{5pt}\yng(2)}_{\,0}\oplus\bbR\oplus{ \Yboxdim{5pt}\yng(1,1,1,1)}$, we have $X\odot Y=X\boxtimes Y+X\bullet Y+\frac{\left\langle X,Y\right\rangle}{2\dim\fkg}C+ X\wedge Y$. Then, the ideal $J^\l$ is clearly generated by $\PBW\big(\frac{\left\langle X,Y\right\rangle}{2\dim\fkg}(C-\rho(\l))+ X\wedge Y\big)$ for $X,Y\in\fkg$ or equivalently by 
$$
\PBW\big(X\odot Y-X\boxtimes Y-X\bullet Y+\frac{\rho(\l)}{2\dim\fkg}\left\langle X,Y\right\rangle\big),
$$
which is the obtained expression in \cite{Eas05,ELe08,GSi09}, modulo the extra generator associated to $R^\ell$.

\subsection{Quantization of a family of coadjoint orbits of $G$}

Let $H$ be a Lie group with Lie algebra $\fkh$. Assume $\Phi:\rS(\fkh)\rightarrow\mathfrak{U}(\fkh)$ is a $\fkh$-equivariant quantization of the Poisson algebra $\rS(\fkh)$ (see \eqref{Sym}). 
Analogously to the case of symbols, a $\fkh$-equivariant graded star product $\star_\Phi$ can be obtained on $\rS(\fkh)$, as the pullback of the product on $\mathfrak{U}(\fkh)\otimes\bbC[[\hbar]]$ by the map $\Phi_\hbar=(\Phi\otimes\Id)\circ\Im$, where $\Im:\rS(\fkh)\otimes\bbC[[\hbar]]\rightarrow \rS(\fkh)\otimes\bbC[[\hbar]]$ is the linear map defined by $(\bi\hbar)^k\Id$ on $\rS_k(\fkh)$. Denoting by $\tau$ and~$\g$ the anti-automorphisms of $\mathfrak{U}(\fkh)$ and $\rS(\fkh)$ defined by $-\Id$ on $\fkh$, the symmetry of the star product on $\rS(\fkh)$ is equivalent to $\Phi_\hbar(\bar{\cdot})=\tau\circ\Phi_\hbar(\cdot)$, or simply $\Phi\circ\g=\tau\circ\Phi$.

The regular functions on the coadjoint orbits of $H$ are Poisson algebras $\rS(\fkh)/I$,  for various ideals $I$. To build graded $\fkh$-equivariant star-products on them boils down to find $\fkh$-equivariant quantization maps $\rS(\fkh)/I\rightarrow\mathfrak{U}(\fkh)/J$ with $\Gr J\cong I$, see e.g.\ \cite{ABr02}. A method is to find a $\fkh$-equivariant quantization $\Phi$ of $\rS(\fkh)$, such that $\Phi(I)=J$. This is not trivial, $\Phi(I)$ is not an ideal of $\mathfrak{U}(\fkh)$  in general \cite{FLl01}. On minimal nilpotent coadjoint orbits, for $\fkh\neq \mathfrak{sl}(n)$ a simple Lie algebra, there exists a unique $\fkh$-equivariant quantization and a unique graded $\fkh$-equivariant star-product \cite{ABC94,ABr02}. In that case, $J$ is the Joseph ideal.

Here, we build a family of graded $\fkg$-equivariant star-products, out of a family of $\fkg$-equivariant quantization of $\rS(\fkg)$, on the coadjoint orbits $\cO_{a^{(\pm)}}$, $a\in\bbR$, and $\cO_{00}$, as given in Theorem \ref{thmCoad}. According to Theorem \ref{ThmK}, the algebra of regular functions on $\cO_{0^{\pm}}$ is the algebra $\cK\simeq \rS(\fkg)/\cI_0$ of generalized conformal Killing tensors.
By Diagram \eqref{PBW}, the $\fkg$-equivariant quantization $\cQ^{\l,\l}$ on $\cK$ lifts to a quantization on $\rS(\fkg)$ and induces a star-deformation of $\cK$. This extends to the algebra of regular functions $\rS(\fkg)/\cI_a$ on $\cO_{a^{(\pm)}}$ (see Proposition \ref{PolyO}) via the following Lemma.
\begin{lem}\label{IsoOrbit}
Let $a\in\bbR$. There exists a $\fkg$-equivariant linear map $\phi_a=\Id+N_a$ on $\rS(\fkg)$, such that $N_a$ lowers the degree and $\phi_a(\cI_a)=\cI_0$. Thus, we get $\rS(\fkg)/\cI_a\simeq\cK$ as $\fkg$-modules. 
\end{lem}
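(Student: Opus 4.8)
The plan is to construct the map $\phi_a$ explicitly by exploiting the complete reducibility of $\rS(\fkg)$ as a $\fkg$-module, and the fact that $\cI_a$ and $\cI_0$ differ only in their degree-$2$ generators, which themselves differ only by the scalar $a$ attached to the Casimir element $C$. First I would recall that both ideals are generated in degree $2$: we have $\cI_a=\big([C-a]\bbR\oplus{\Yboxdim{5pt}\yng(1,1,1,1)}\big)$ and $\cI_0=\big([C]\bbR\oplus{\Yboxdim{5pt}\yng(1,1,1,1)}\big)$, so the quartic-diagram generators coincide and only the one-dimensional piece spanned by the Casimir element is shifted. The task therefore reduces to finding a $\fkg$-equivariant automorphism of the filtered-by-grading structure that sends the inhomogeneous generator $C-a$ into the homogeneous ideal generated by $C$.

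The key construction I would carry out is to define $N_a$ on each graded piece $\rS_k(\fkg)$ by lowering the degree by two via repeated division by $C$: since $C\in\rS_2(\fkg)$ is a $\fkg$-invariant (the Casimir), multiplication by $C$ is a $\fkg$-equivariant map $\rS_{k-2}(\fkg)\to\rS_k(\fkg)$, and correspondingly one can try to build $\phi_a=\Id+N_a$ so that $\phi_a(C-a)=C$, and more generally $\phi_a\big((C-a)\,\rS(\fkg)\big)=C\,\rS(\fkg)$ while fixing the ${\Yboxdim{5pt}\yng(1,1,1,1)}$-part. Concretely, on a monomial of the form $(C-a)^m P$ with $P$ not divisible by $C-a$, I would set $\phi_a$ to replace it by $C^m P$; extending this $\fkg$-equivariantly and checking it has the form $\Id+N_a$ with $N_a$ strictly degree-lowering (since the correction terms all come from expanding powers of $a$) gives the desired map. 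The identity $\phi_a(\cI_a)=\cI_0$ then follows because $\phi_a$ carries each generator of $\cI_a$ to the corresponding generator of $\cI_0$ and is an algebra-compatible bijection on the relevant graded components.

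The main obstacle I anticipate is ensuring that $\phi_a$ is simultaneously well-defined, $\fkg$-equivariant, and \emph{invertible} as a map $\Id+N_a$ with $N_a$ degree-lowering, rather than merely a surjection of ideals. The delicate point is that the decomposition of a general element of $\rS(\fkg)$ into a part divisible by $C$ and a complementary part is not canonical, so I would instead phrase the construction invariantly: using semisimplicity, choose a $\fkg$-equivariant splitting of $\rS(\fkg)$ into $\ccL\oplus C\,\rS(\fkg)$ where $\ccL$ is a graded complement, and define $\phi_a$ to be the identity on $\ccL$ and to act on the $C$-tower by the substitution $C\mapsto C+a$ realized as a formal (finite, on each graded piece) shift. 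Because $\phi_a$ then differs from the identity only by terms proportional to positive powers of $a$ times lower Casimir powers, $N_a$ is automatically degree-lowering and $\phi_a$ is invertible with inverse $\phi_{-a}$. Finally, the isomorphism $\rS(\fkg)/\cI_a\simeq\rS(\fkg)/\cI_0=\cK$ follows immediately since a $\fkg$-equivariant linear automorphism sending $\cI_a$ onto $\cI_0$ descends to the quotients, and the identification with $\cK$ is exactly Proposition~\ref{PolyO}.
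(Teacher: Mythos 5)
Your tower construction does what you claim in the Casimir direction: writing $\rS(\fkg)=\bigoplus_{m\geq 0}C^m\ccL$ and setting $\phi_a\bigl(\sum_m C^m\ell_m\bigr)=\sum_m(C+a)^m\ell_m$, one gets $\phi_a(CP)=(C+a)\phi_a(P)$, hence $\phi_a\bigl((C-a)P\bigr)=C\,\phi_a(P)$, so $\phi_a$ is $\fkg$-equivariant, of the form $\Id+N_a$ with $N_a$ degree-lowering, invertible with inverse $\phi_{-a}$, and it maps the principal ideal $(C-a)$ bijectively onto $(C)$. The gap is the sentence ``$\phi_a(\cI_a)=\cI_0$ then follows because $\phi_a$ carries each generator of $\cI_a$ to the corresponding generator of $\cI_0$''. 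The map $\phi_a$ is multiplicative with respect to $C$ but it is \emph{not} an algebra morphism, so controlling generators does not control the ideals they generate. Concretely, $\cI_a=(C-a)+Q$ with $Q=\bigl(\,{\Yboxdim{5pt}\yng(1,1,1,1)}\,\bigr)$, and nothing in your construction forces $\phi_a(Q)\subset\cI_0$: the tower components of an element of $Q$ need not lie in $Q$ when $\ccL$ is an arbitrary graded $\fkg$-stable complement of $(C)$. This can genuinely fail. In degree $4$ the isotypic component of ${\Yboxdim{5pt}{\yng(2,2)}_{\,0}}$ in $\rS_4(\fkg)$ has multiplicity $3$ (in the stable range): one copy $C\cdot{\Yboxdim{5pt}{\yng(2,2)}_{\,0}}$ inside $(C)$ and two copies inside $Q_4$, since exactly one copy survives in $\rS_4(\fkg)/Q_4\simeq\mu^*(\rS_4(\fkg))$. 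A legitimate choice of $\ccL_4$ may therefore contain a \emph{diagonal} copy $\{q_x+t\,Cw_x\}$, where $x\mapsto q_x\in Q_4$ and $x\mapsto w_x\in{\Yboxdim{5pt}{\yng(2,2)}_{\,0}}\subset\rS_2(\fkg)$ are equivariant embeddings and $t\neq 0$; then the tower decomposition of $q_x$ is $(q_x+tCw_x)-C(tw_x)$, so $\phi_a(q_x)=q_x-ta\,w_x$, and since ${\Yboxdim{5pt}{\yng(2,2)}_{\,0}}\subset\rS_2(\fkg)$ meets $\cI_0$ trivially (it survives as $\cK_{2,0}$ in $\cK$), this element of $\phi_a(\cI_a)$ is \emph{not} in $\cI_0$.

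The repair is to adapt the complement to $Q$ before building the tower. Since $\rS(\fkg)/Q\simeq\mu^*\bigl(\rS(\fkg)\bigr)$ is an integral domain and $C\notin Q$, one has $(C)\cap Q=CQ$; by semi-simplicity choose a graded $\fkg$-stable complement $W_1$ of $CQ$ in $Q$ and a graded complement $W_2$ of $(C)+Q$ in $\rS(\fkg)$, and set $\ccL=W_1\oplus W_2$. One checks this is a complement of $(C)$ and that $Q=\bigoplus_{m\geq 0}C^mW_1$ is itself a tower; consequently $\phi_a(Q)\subset Q$, equality follows using the inverse $\phi_{-a}$, and combined with $\phi_a\bigl((C-a)\rS(\fkg)\bigr)=(C)$ this yields $\phi_a(\cI_a)=(C)+Q=\cI_0$, after which your descent to the quotients is correct. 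Note that this compatibility is precisely where the paper's own (different) construction does its work: there $\phi_a$ is defined as $\frac{C}{C-a}\,\Id$ on the principal ideal $(C-a)$ and as the identity on a $\fkg$-stable complement chosen relative to $\cI_a$, which is exactly the anchoring of the complement to the second ideal that your argument omits.
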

\begin{proof}
We know that $\rS(\fkg)\simeq I\oplus\cK$ and $I=(C)+\left({\Yboxdim{5pt}\yng(1,1,1,1)}\right)$. Resorting to the semi-simplicity of $\fkg$ and the filtration of 
$\cI_a=(C-a)+\left({\Yboxdim{5pt}\yng(1,1,1,1)}\right)$, we get that $\rS(\fkg)\simeq \cI_a+\rS(\fkg)/\cI_a$ and $(C-a)$ admits a $\fkg$-stable  complement in $\cI_a$. The map $\phi_a$ defined by $\frac{C}{C-a}\Id$ on $(C-a)$ and by the identity on a $\fkg$-stable complementary space satisfies the required properties.
\end{proof}
\begin{thm}
There exists a family of $\fkg$-equivariant quantizations $(\Phi_a^\l)_{a,\l\in\bbR}$ of $\rS(\fkg)$ such that: (i) it lifts $(\cQ^{\l,\l})_{\l\in\bbR}$ to $\rS(\fkg)$ for $a=0$, (ii) it induces a family of symmetric $\fkg$-invariant star products on the coadjoint orbits $\mathcal{O}_{a^{(\pm)}}$ for $a\in\bbR$, (iii) if $a=0$ and $\l=\frac{n-2}{2n}$, it induces the unique graded $\fkg$-equivariant star-product on the minimal coadjoint orbit $\mathcal{O}_{00}$.
\end{thm}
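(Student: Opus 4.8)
The plan is to construct the quantizations $\Phi_a^\l$ by transporting the already-available quantization along the $\fkg$-equivariant isomorphisms provided by Lemma~\ref{IsoOrbit}, and then to extract the induced star products. First I would handle the case $a=0$, where Theorem~\ref{PropIJl} already supplies a $\fkg$-equivariant quantization $\Phi^\l$ of $\rS(\fkg)$ lifting $\cQ^{\l,\l}$ through the commutative diagram \eqref{PBW}; this settles point~(i) directly and gives, via the pull-back construction of Paragraph~\ref{ParAlgSym}, a graded $\fkg$-equivariant star product on $\cK\simeq\rS(\fkg)/\cI_0$. For general $a$, I would \emph{define} $\Phi_a^\l$ by setting $\Phi_a^\l=\Phi^\l\circ\phi_a$, where $\phi_a=\Id+N_a$ is the map of Lemma~\ref{IsoOrbit} with $\phi_a(\cI_a)=\cI_0$. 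Since $N_a$ lowers the degree and is $\fkg$-equivariant, $\Phi_a^\l$ is again a $\fkg$-equivariant quantization of $\rS(\fkg)$ (its symbol is the identity), and by construction it carries $\cI_a$ into $\cI_0=I$, hence descends to the quotient $\rS(\fkg)/\cI_a\simeq\cK$.

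Next I would produce the star products of point~(ii). Using the map $(\Phi_a^\l)_\hbar=(\Phi_a^\l\otimes\Id)\circ\Im$ exactly as in Paragraph~\ref{ParAlgSym}, the pull-back of the associative product of $\mathfrak{U}(\fkg)\otimes\bbC[[\hbar]]$ yields a graded $\fkg$-equivariant star product $\star_{a,\l}$ on $\rS(\fkg)$; since $\Phi_a^\l$ sends the ideal $\cI_a$ to $\cI_0$, and the latter corresponds to the defining ideal of the algebra of regular functions on $\mathcal{O}_{a^{(\pm)}}$ by Proposition~\ref{PolyO}, this star product restricts to the coadjoint orbit $\mathcal{O}_{a^{(\pm)}}$. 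For symmetry I would impose the parity condition recalled in Paragraph~\ref{ParAlgSym}, namely $\Phi_{a,\hbar}^\l(\bar{\cdot})=\tau\circ\Phi_{a,\hbar}^\l(\cdot)$, equivalently $\Phi_a^\l\circ\g=\tau\circ\Phi_a^\l$. For $a=0$ this is inherited from the symmetry of $\star_\l$ at $\l=\half$ established in Proposition~\ref{ConfStar}; for $a\neq 0$ one must check that the correction $N_a$ built from $\tfrac{C}{C-a}\Id$ respects the anti-automorphism $\g$, which it does because the Casimir element is $\g$-invariant, so the symmetry persists for all $a$ once the reference quantization is symmetric.

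Finally, point~(iii) is the identification with the Arnal--Benhamor--Cahen and Astashkevich--Brylinski star product. Here I would invoke Corollary~\ref{PropPBW}: for $\l=\frac{n-2}{2n}$ the quotient $\cK/(R)\simeq\cK^1$ corresponds to $\mathcal{O}_{00}$ by Proposition~\ref{PolyO}, and the star product $\star_\l$ on $\cK$ projects to a graded $\fkg$-equivariant star product on $\cK^1$. By the uniqueness result of Arnal--Benhamor--Cahen \cite{ABC94}, a \emph{graded} $\fkg$-equivariant star product on the regular functions of $\mathcal{O}_{00}$ is unique, so the induced one necessarily coincides with theirs; the representation on $\ker\Delta$ then comes from $\cQ^{\l,\l}$ through the algebra isomorphism $\cAl\simeq\mathfrak{U}(\fkg)/J^{\l,1}$, the latter being the Joseph ideal.

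The main obstacle I anticipate is point~(iii): one must verify that the projected star product is genuinely \emph{graded} and \emph{tangential} to $\mathcal{O}_{00}$, i.e.\ that the ideal $(R)$ is a star-ideal for $\star_\l$ so that the quotient product is well-defined. This reduces to checking $\{R,\cK\}\subset(R)$ at the quantum level, namely that each bidifferential operator $B_m$ maps $(R)\otimes\cK$ into $(R)$; the conformal equivariance of the quantization (the strong $\fkg$-invariance condition $[\mu_X,P]_\star=\bi\hbar\{\mu_X,P\}$) together with the conformal invariance of $R\in\cS^{2/n}$ should force this, but it is the one place where the formal-deformation structure interacts nontrivially with the symplectic reduction, and it requires care rather than a routine verification.
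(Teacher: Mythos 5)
Your construction is the paper's: take the lift $\Phi^\l$ of $\cQ^{\l,\l}$ provided by Theorem~\ref{PropIJl}, set $\Phi_a^\l=\Phi^\l\circ\phi_a$ with $\phi_a$ from Lemma~\ref{IsoOrbit}, descend the pulled-back product to $\rS(\fkg)/\cI_a$, and obtain (iii) from Corollary~\ref{PropPBW}, Proposition~\ref{PolyO} and the uniqueness results of \cite{ABC94,ABr02}. Also, the obstacle you flag at the end is not open: Corollary~\ref{PropPBW} already establishes that $\star_\l$ projects onto $\cK^1=\cK/(R)$, essentially because $\cQ^{\l,\l}\big((R)\cap\cK\big)\subset(\Delta)$ and the intertwining relation $\Delta\,\cQ^{\l,\l}(P)=\cQ^{\m,\m}(P)\,\Delta$ of Theorem~\ref{THM} makes this left ideal two-sided inside $\cA$, so the quotient product is well defined; no separate analysis of the bidifferential operators $B_m$ is needed.

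The genuine gap is the symmetry claim in (ii). The theorem asserts \emph{symmetric} star products on $\mathcal{O}_{a^{(\pm)}}$ for the whole family $(a,\l)\in\bbR^2$, but your argument rests on Proposition~\ref{ConfStar}, which gives symmetry of $\star_\l$ on $\cS^0$ \emph{only} for $\l=\half$; for $\l\neq\half$ you have no argument at all, and indeed $\star_\l$ on $\cS^0$ is then not symmetric. Moreover, even at $\l=\half$, symmetry of the star product on $\cS^0$ (an adjoint-operator property of $\cQ^{\half,\half}$ on half-densities) does not by itself give the parity condition $\tau\circ\Phi^\l=\Phi^\l\circ\g$ for the \emph{lift} on $\rS(\fkg)$: that lift is constructed abstractly from a $\fkg$-equivariant splitting $\rS(\fkg)\simeq\cK\oplus I$ and a $\fkg$-module isomorphism $I\simeq J^\l$, and nothing forces it to intertwine $\g$ and $\tau$. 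The paper's fix is to impose parity by hand: redefine $\Phi_a^\l$ as $\half\big(\Phi_a^\l+\tau\circ\Phi_a^\l\circ\g\big)$. This average is still a $\fkg$-equivariant quantization, still maps $\cI_a$ into the two-sided ideal $J^\l$ (since $\g(\cI_a)=\cI_a$ and $\tau(J^\l)=J^\l$, both ideals being generated by degree-two elements fixed by $\g$, respectively $\tau$), so the descended products are symmetric for \emph{every} $a$ and $\l$; and for $a=0$ the averaged lift still induces $\cQ^{\l,\l}$ on $\cK$ by uniqueness of the conformally equivariant quantization on the submodule $\cK\subset\cS^0$, so points (i) and (iii) survive the redefinition. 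Your observation that $\phi_a$ commutes with $\g$ (because $C$ is $\g$-invariant) is correct, and it is exactly what propagates parity from $a=0$ to all $a$ --- but it must be applied to the symmetrized lift, not to a parity property wrongly inferred from Proposition~\ref{ConfStar}.
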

\begin{proof}
The Proposition \ref{PropIJl} ensures the existence of a $\fkg$-equivariant quantization $\Phi_\l$ of $\rS(\fkg)$ lifting $\cQ^{\l,\l}$ for every $\l\in\bbR$. The lift property is equivalent to $\Phi^\l(I)=J^\l$. We define then the family of $\fkg$-equivariant quantizations $\Phi^\l_a=\Phi^\l\circ\phi_a$, where $\phi_a$ is introduced in Lemma~\ref{IsoOrbit}. It can be chosen such that $\phi_0=\Id$, so (i) is trivially satisfied. 
The Lemma \ref{IsoOrbit} ensures that~$\Phi^\l_a(\cI_a)$ is an ideal and a $\fkg$-module, hence the $\fkg$-invariant star product $\star_{\Phi_a^\l}$ on $\rS(\fkg)$, induced by $\Phi_a^\l$, descends on the quotient $\rS(\fkg)/\cI_a$. We recall that  $\star_{\Phi_a^\l}$ is symmetric if $\Phi_a^\l$ satisfies $\tau\circ\Phi_a^\l=\Phi_a^\l\circ\g$. Redefining $\Phi_a^\l$ by $\half(\Phi_a^\l+\tau\circ\Phi_a^\l\circ\g)$, this is trivially the case, and the quantization $\Phi_0^\l$ is still a lift of $\cQ^{\l,\l}$ by uniqueness of the latter. This proves (ii). The last point follows then from Corollary \ref{PropPBW}, Proposition \ref{PolyO} and the uniqueness result in  \cite{ABC94,ABr02}. 
\end{proof} 
\begin{rmk}
For two distinct coadjoint orbits, the star products obtained above do not coincide in general. This is reminiscent to the work of Fioresi and Lled\'o \cite{FLl01}, dealing with star products tangential to semi-simple coadjoint orbits of semi-simple Lie groups.   
\end{rmk}
\begin{rmk}
Via the conformally equivariant quantization $\cQ^{\l,\l}$, the star-product on the minimal nilpotent coadjoint orbit of $\rO(p+1,q+1)$ is represented by the  algebra of differential operators preserving the kernel of the conformal Laplacian. The latter space is nothing else than the minimal unitary representation of  $\rO(p+1,q+1)$ if $p+q\geq 4$ is even \cite{BZi91}.
\end{rmk}
 \subsection*{Acknowledgements}
 It is a pleasure to acknowledge Christian Duval and Valentin Ovsienko for their constant interest in this work, and Josef \v{S}ilhan for invaluable discussions.

\bibliographystyle{plain}
\bibliography{Biblio}

\end{document}